\documentclass[11pt]{article}
\usepackage[margin=1in]{geometry}
\usepackage{amsmath,amssymb,amsthm,mathtools}
\usepackage{enumitem,hyperref,microtype}
\newtheorem{theorem}{Theorem}[section]\newtheorem{lemma}[theorem]{Lemma}\newtheorem{corollary}[theorem]{Corollary}\newtheorem{proposition}[theorem]{Proposition}
\newcommand{\Z}{\mathbb Z}\newcommand{\supp}{\operatorname{supp}}\newcommand{\Stab}{\operatorname{Stab}}\newcommand{\wt}{\operatorname{wt}}\newcommand{\Ncal}{\mathcal N}
\title{Sensitivity of Rotation Symmetric Boolean Functions via Binary Necklaces}\author{Yuan Li\thanks{Corresponding author.\\\textbf{2020 Mathematics Subject Classification.} Primary 94D10; Secondary 05E18.}\\
Department of Mathematics\\
Winston-Salem State University\\
Winston-Salem, North Carolina, USA\\
Email: \texttt{liyu@wssu.edu}
\and
Jing Zhang\\
Mathematics Department\\
Governors State University\\
University Park, Illinois 60484, USA\\
Email: \texttt{jzhang@govst.edu}}\date{}
\begin{document}\maketitle

\begin{abstract}
Rotation symmetric Boolean functions are Boolean functions invariant under cyclic permutation of their variables and can be represented by binary necklaces.  We express local sensitivity as a weighted bichromatic degree in the necklace quotient and derive an exact formula for the multiplicities between neighboring necklaces.  For an odd prime number of variables, we obtain explicit sensitivity formulas and show that multiplicity corrections occur only on an asymptotically negligible family of necklaces.  For monomial rotation symmetric Boolean functions, we prove that every odd-degree function has maximum sensitivity, determine the sensitivity of every quadratic function, and determine the sensitivity of the consecutive quartic family.  We also determine exactly the sensitivity of cyclic run-threshold functions for arbitrary length and threshold.
\end{abstract}

\noindent\textbf{Keywords.}
rotation symmetric Boolean function; sensitivity; binary necklace; cyclic symmetry; Boolean function complexity; Hamming distance.

\section{Introduction}

For a Boolean function $f:\{0,1\}^n\to\{0,1\}$ and an input $x$, the \emph{sensitivity} $s(f,x)$ is the number of coordinates whose individual flip changes the value of $f$, and
\[
s(f)=\max_{x\in\{0,1\}^n}s(f,x).
\]
We also write
\[
s_b(f)=\max\{s(f,x):f(x)=b\},\qquad b\in\{0,1\}.
\]
Sensitivity is one of the basic local complexity measures of Boolean functions and has been extensively studied in relation to symmetry, transitivity, block sensitivity, and decision-tree complexity.

A Boolean function is \emph{rotation symmetric} if it is invariant under the cyclic permutation of its coordinates.  Writing
\[
R_k(x_0,x_1,\ldots,x_{n-1})
=
(x_{-k},x_{1-k},\ldots,x_{n-1-k}),
\]
with indices in $\mathbb Z_n$, this means
\[
f(R_kx)=f(x)
\qquad\text{for every }x\in\{0,1\}^n
\text{ and }k\in\mathbb Z_n.
\]
The orbit
\[
[x]=\{R_kx:k\in\mathbb Z_n\}
\]
is a binary necklace.  Thus a rotation symmetric Boolean function is equivalently a two-coloring of the set of binary necklaces of length $n$.  Rotation symmetric Boolean functions have a substantial literature, particularly in cryptography and in the study of transitive Boolean functions; see, for example, St\u anic\u a and Maitra~\cite{StanicaMaitra}.  Chakraborty~\cite{Chakraborty} studied sensitivity for cyclically invariant Boolean functions and obtained asymptotic constructions and lower bounds.

Rotation symmetric Boolean functions have also been studied extensively through their algebraic and cryptographic parameters.  Cusick and St\u anic\u a investigated fast evaluation, Hamming weights, and nonlinearity of rotation symmetric functions~\cite{CusickStanica2002}.  Cusick and Cheon studied affine equivalence for cubic rotation symmetric Boolean functions~\cite{CusickCheon2014}, while Cusick later established linear weight recurrences for arbitrary rotation symmetric Boolean functions~\cite{Cusick2018}.  More recent work continues to investigate structural and cryptographic properties such as correlation immunity~\cite{LakshmyCusick2026}.  This sustained line of research illustrates a useful feature of rotation symmetry: it imposes enough structure to permit exact analysis while retaining substantially richer behavior than full permutation symmetry.

The present paper also continues the authors' recent work on exact complexity measures of structured Boolean functions.  For elementary symmetric Boolean functions of arbitrary degree, Zhang and Li recently determined the certificate complexity completely~\cite{ZhangLiCertificate2026}, while Li and Zhang subsequently determined sensitivity, average sensitivity, and block sensitivity and classified the relations among sensitivity, block sensitivity, and certificate complexity~\cite{LiZhangSensitivity2026}.  Those functions are invariant under the full symmetric group on the variables.  Here we move to the broader class of rotation symmetric Boolean functions, where only cyclic invariance is assumed, and seek similarly explicit sensitivity formulas by exploiting the finer orbit structure provided by binary necklaces.

From the complexity viewpoint, rotation symmetric functions are also natural examples of transitive Boolean functions, since the cyclic group acts transitively on the coordinate set.  Although many algebraic and cryptographic parameters of rotation symmetric Boolean functions have been studied, exact combinatorial complexity measures have received comparatively less attention.  Chakraborty's work~\cite{Chakraborty} treats sensitivity of cyclically invariant Boolean functions primarily from an asymptotic viewpoint.  Our approach is complementary: we exploit the binary-necklace quotient to obtain exact local formulas.

Building on the established necklace quotient viewpoint, our first purpose is to develop an exact necklace-level description of sensitivity.  The quotient of the Boolean cube by rotation is naturally a multigraph: if $X=[x]$ and $Y$ are necklaces, the directed multiplicity
\[
m(X,Y)=
\bigl|\{i\in\mathbb Z_n:[x^{(i)}]=Y\}\bigr|
\]
counts the one-coordinate flips of a representative of $X$ that land in $Y$.  Such flip multiplicities in the necklace quotient are already present in the combinatorial literature; see Gregor et al.~\cite{GregorEtAl}.  If $\bar f$ denotes the induced Boolean function on necklaces, then the local sensitivity at $X$ is the weighted bichromatic degree
\[
s(f,X)=
\sum_{\bar f(Y)\ne\bar f(X)}m(X,Y).
\]
The main issue is therefore to determine the multiplicities $m(X,Y)$ structurally.

For $x\in\{0,1\}^n$, let
\[
H_x=\operatorname{Stab}(x)
=\{R_k:R_kx=x\},
\qquad h_x=|H_x|,
\]
where $\operatorname{Stab}(x)$ denotes the subgroup of cyclic rotations that fix $x$.
Let
\[
D_k(x)=\{j:x_j\ne(R_kx)_j\}.
\]
For a coordinate $i$, define
\[
q_i(x)=
\#\{k:|D_k(x)|=2,\ i+k\in D_k(x)\}.
\]
Our first main formula is
\[
\boxed{
m([x],[x^{(i)}])
=
\frac{h_x+q_i(x)}{h_{x^{(i)}}}.
}
\]
It follows from an orbit--stabilizer double count and separates the multiplicity into exact rotational symmetry and distance-two near-periodicity.  Rotations at Hamming distance two have recently been classified in detail by Makhynko~\cite{Makhynko}; we use that structure rather than claim it as new.

When $n=p$ is an odd prime, the formula simplifies substantially.  Every nonconstant length-$p$ word has trivial rotational stabilizer, and the relevant distance-two necklaces are cyclic arithmetic intervals.  We show that interior necklace multiplicities are $1$ or $2$ and obtain explicit local sensitivity formulas for every Hamming layer, including the exceptional weight-one and weight-$(p-1)$ layers.  Moreover, the multiplicity correction is required only for an asymptotically negligible set of necklaces: using Makhynko's enumeration, the exceptional proportion is
\[
O\!\left(\frac{p^3}{2^p}\right).
\]
Thus for asymptotically almost every prime-length necklace, local sensitivity is simply its ordinary bichromatic degree in the unweighted necklace quotient.

We also study monomial rotation symmetric functions across degrees.  If the generating monomial has support $A$, degree $d$, and rotational stabilizer of order $h$, then every coordinate occurs in exactly $d/h$ distinct orbit monomials.  This yields a parity theorem at the all-one input and, in particular, the general conclusion
\[
\boxed{\text{every odd-degree MRS function has }s(f)=n.}
\]
Thus the genuinely nontrivial degree-by-degree problem begins with even degree.  We solve degree two completely.  If $Q_{n,k}$ is generated by $x_0x_k$ and we are outside the short-orbit case $n$ even, $k=n/2$, then with
\[
g=\gcd(n,2k),\qquad L=n/g,
\]
we prove
\[
s(Q_{n,k})=
\begin{cases}
n,&L\text{ even},\\
n-g,&L\text{ odd}.
\end{cases}
\]
The proof reduces sensitivity to the maximum number of transitions on the cycles of the permutation $j\mapsto j+2k$.  In particular, for every odd prime $p$, all quadratic MRS functions have sensitivity $p-1$.

For degree four, we do not attempt to classify all monomial rotation symmetric functions.  Instead, we completely solve the natural consecutive quartic family generated by $x_0x_1x_2x_3$:
\[
R_n=\bigoplus_i x_ix_{i+1}x_{i+2}x_{i+3}.
\]
For $n\ge5$, writing $n=6q+r$, we prove
\[
s(R_n)=n-\min\{r,6-r\}.
\]
The proof uses a finite-state description of local sensitivity and a period-six defect certificate.  In particular, although the all-one input has zero sensitivity in this even-degree family, maximum sensitivity $n$ is attained whenever $6\mid n$.

Our second purpose is to obtain an exact sensitivity formula for a standard cyclically invariant example.  For $2\le r\le n$, define the \emph{cyclic run-threshold function}
\[
F_{n,r}(x)=1
\quad\Longleftrightarrow\quad
x\text{ contains a cyclic run of at least }r\text{ consecutive ones}.
\]
Chaubal and G\'al~\cite{ChaubalGal} used this type of consecutive-ones construction to obtain asymptotically tight sensitivity behavior for a class of transitive functions.  We determine the sensitivity of $F_{n,r}$ exactly for arbitrary $n$ and $r$:
\[
\boxed{
s_1(F_{n,r})=r,\qquad
s_0(F_{n,r})=
\left\lfloor\frac{2n}{r+1}\right\rfloor,
}
\]
and consequently
\[
\boxed{
s(F_{n,r})=
\max\left\{
r,\left\lfloor\frac{2n}{r+1}\right\rfloor
\right\}.
}
\]
The proof of the $0$-sensitivity formula reduces to a sharp cyclic block-length optimization lemma.

\medskip
\noindent\textbf{Main contributions.}
The main contributions of this paper can be summarized as follows.  First, we express sensitivity in the established weighted-necklace framework and derive a stabilizer/near-period formula for neighboring-necklace multiplicities, with explicit consequences for odd prime length.  Second, for monomial rotation symmetric Boolean functions, we prove that every odd-degree MRS function has maximum sensitivity and completely determine the sensitivity of all quadratic MRS functions.  Third, for degree four we completely solve the natural consecutive family generated by $x_0x_1x_2x_3$, while making no claim of a classification for arbitrary quartic generators.  Finally, we determine the sensitivity exactly for cyclic run-threshold functions for arbitrary length and threshold.

The paper is organized as follows.  Section~2 develops the necklace formulation of sensitivity, Section~3 studies rotations at Hamming distance two, and Section~4 proves the general multiplicity formula.  Section~5 treats the prime-length near-period structure, while Section~6 derives the odd-prime sensitivity formulas and generic-simplicity consequences.  Section~7 studies monomial rotation symmetric functions, including odd-degree, quadratic, and consecutive quartic cases.  Section~8 establishes the exact cyclic run-threshold formula.  Section~9 records computational verification, Section~10 presents further questions, and Appendix~A provides the finite-state certificate used for the consecutive quartic case.

\section{Preliminaries}

Throughout, coordinates are indexed by $\mathbb Z_n$, and all additions of coordinate indices are taken modulo $n$.
Identify coordinates with $\Z_n$. Let $R_k$ be rotation by $k$. For $x\in\{0,1\}^n$, let $[x]=\{R_kx:k\in\Z_n\}$ and let $\Ncal_n$ be the set of necklaces. Let $x^{(i)}$ denote $x$ with coordinate $i$ flipped. Then
\[s(f,x)=|\{i:f(x)\ne f(x^{(i)})\}|,\qquad s(f)=\max_xs(f,x).\]
For rotation symmetric $f$, write $\bar f([x])=f(x)$. Since rotation merely permutes the coordinate positions, $s(f,R_kx)=s(f,x)$ for every $k\in\mathbb Z_n$. Hence sensitivity is constant on each necklace. For a necklace $X=[x]$, we therefore define
\[s(f,X):=s(f,x),\]
which is independent of the choice of representative $x\in X$. For necklaces $X=[x]$ and $Y$, define
\[m(X,Y)=|\{i\in\Z_n:[x^{(i)}]=Y\}|.\]
\begin{lemma}The quantity $m(X,Y)$ is independent of the representative $x$ of $X$.\end{lemma}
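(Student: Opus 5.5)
The plan is to compare two representatives directly and exhibit an explicit bijection between the index sets being counted. Let $x$ and $x'=R_kx$ be two representatives of $X$. We want to show that the sets
\[
I_x=\{i\in\Z_n:[x^{(i)}]=Y\},\qquad I_{x'}=\{i\in\Z_n:[x'^{(i)}]=Y\}
\]
have the same cardinality. The natural candidate bijection is the coordinate shift $i\mapsto i+k$, which corresponds to how rotation moves positions.

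The key step is the commutation identity
\[
R_k\bigl(x^{(i)}\bigr)=(R_kx)^{(i+k)} .
\]
This follows from the definition $(R_kx)_j=x_{j-k}$. The $j$-th coordinate of $R_k(x^{(i)})$ equals $x^{(i)}_{j-k}$, which differs from $x_{j-k}=(R_kx)_j$ exactly when $j-k=i$, that is, when $j=i+k$. Hence $R_k(x^{(i)})$ and $R_kx$ differ precisely in coordinate $i+k$, which is the claimed identity.

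Since $R_k(x^{(i)})$ is a rotation of $x^{(i)}$, the two words lie in the same necklace, so $[x'^{(i+k)}]=[x^{(i)}]$. Therefore $i\in I_x$ if and only if $i+k\in I_{x'}$. Translation by $k$ is a bijection of $\Z_n$, so it maps $I_x$ bijectively onto $I_{x'}$, and thus $|I_x|=|I_{x'}|$. Every representative of $X$ has the form $R_kx$ for some $k$, so this proves the lemma.

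No step here is a genuine obstacle. The only point needing care is the index convention, namely that the flipped position moves to $i+k$ and not $i-k$ under $R_k$ as defined. Even a sign slip would be harmless, since either translation is a bijection of $\Z_n$.
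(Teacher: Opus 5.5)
Your proposal is correct and follows the paper's proof: the paper also uses the commutation identity $R_k(x^{(i)})=(R_kx)^{(i+k)}$ together with the fact that $i\mapsto i+k$ is a bijection of $\Z_n$. You have simply written out the coordinate check and the resulting bijection $I_x\to I_{x'}$ in more detail than the paper's one-line argument.
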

\begin{proof}If $x'=R_kx$, then $R_k(x^{(i)})=(R_kx)^{(i+k)}$, and $i\mapsto i+k$ is a bijection.\end{proof}
\begin{theorem}[Necklace sensitivity formula]
For every necklace $X$,
\[s(f,X)=\sum_{\bar f(Y)\ne\bar f(X)}m(X,Y),\qquad s(f)=\max_X\sum_{\bar f(Y)\ne\bar f(X)}m(X,Y).\]
\end{theorem}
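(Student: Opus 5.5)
The plan is to fix a representative and regroup the coordinates of the Boolean cube according to which necklace each flip lands in. Let $X$ be a necklace and choose any $x\in X$. By the definition $s(f,X):=s(f,x)$, which is well defined because $s(f,R_kx)=s(f,x)$ as noted above, we have
\[
s(f,X)=\bigl|\{i\in\Z_n: f(x^{(i)})\ne f(x)\}\bigr|.
\]
Because $f$ is rotation symmetric, $f(x^{(i)})=\bar f([x^{(i)}])$ and $f(x)=\bar f(X)$. So the condition $f(x^{(i)})\ne f(x)$ depends only on the necklace $Y=[x^{(i)}]$.

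Next, partition $\Z_n$ according to the necklace reached by flipping coordinate $i$:
\[
\Z_n=\bigsqcup_{Y\in\Ncal_n}\{i:[x^{(i)}]=Y\}.
\]
The block indexed by $Y$ has cardinality $m(X,Y)$. This holds by definition, and the preceding Lemma shows it does not depend on the choice of $x$. Counting the sensitive coordinates block by block, each block lies either entirely inside or entirely outside the sensitive set, according to whether $\bar f(Y)\ne\bar f(X)$. Summing over the blocks then gives $s(f,X)=\sum_{\bar f(Y)\ne\bar f(X)}m(X,Y)$. The sum is finite, and only the neighbouring necklaces contribute, since $m(X,Y)=0$ otherwise.

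For the second identity, every $x\in\{0,1\}^n$ lies in exactly one necklace, and $s(f,x)=s(f,[x])$. Hence $s(f)=\max_x s(f,x)=\max_{X\in\Ncal_n}s(f,X)$, and substituting the first identity finishes the argument.

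There is no real obstacle here. The only point that needs care is well-definedness: both the value of $\bar f$ on $[x^{(i)}]$ and the counts $m(X,Y)$ must be independent of the representative. The first follows from rotation invariance of $f$, and the second is exactly the preceding Lemma, so I will cite that Lemma explicitly at the partition step.
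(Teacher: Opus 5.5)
Your proof is correct and follows the paper's argument: you partition the coordinates of a representative by the necklace $[x^{(i)}]$ their flip reaches, note that each block has size $m(X,Y)$ and is entirely sensitive or entirely insensitive according to whether $\bar f(Y)\ne\bar f(X)$, and then maximize over necklaces. Your version just writes out the well-definedness checks that the paper's one-line proof leaves implicit.
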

\begin{proof}Partition the sensitive coordinates according to the necklace reached by their flip.\end{proof}
\section{Periods and near-periods}
For $x$ and $k\in\Z_n$, define $D_k(x)=\{r:x_r\ne(R_kx)_r\}$.
\begin{theorem}[Flip-equivalence criterion]
For $i,j\in\Z_n$, $[x^{(i)}]=[x^{(j)}]$ iff for some $k$, either (i) $R_kx=x$ and $j=i+k$, or (ii) $|D_k(x)|=2$ and $D_k(x)=\{j,i+k\}$.
\end{theorem}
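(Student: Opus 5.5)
The plan is to unwind the definition of necklace equality directly, using the commutation identity from the proof of the representative-independence lemma for $m$: for every $k$ and $i$,
\[
R_k\bigl(x^{(i)}\bigr)=(R_kx)^{(i+k)}.
\]
By definition, $[x^{(i)}]=[x^{(j)}]$ holds iff there is some $k\in\Z_n$ with $R_k(x^{(i)})=x^{(j)}$. By the identity, this is the equation $(R_kx)^{(i+k)}=x^{(j)}$. I will therefore fix $k$ and characterize exactly when this single equation holds.

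The key step is a set-theoretic reformulation. Two words $u,v$ satisfy $u^{(a)}=v^{(b)}$ iff $u$ and $v$ differ exactly on the symmetric difference $\{a\}\triangle\{b\}$. This holds because flipping coordinate $a$ of $u$ and coordinate $b$ of $v$ toggles the disagreement set by $\{a\}$ and by $\{b\}$. Apply this with $u=R_kx$, $v=x$, $a=i+k$, $b=j$. The disagreement set of $R_kx$ and $x$ is exactly $D_k(x)=\{r:x_r\ne(R_kx)_r\}$. So $R_k(x^{(i)})=x^{(j)}$ iff
\[
D_k(x)=\{i+k\}\triangle\{j\}.
\]

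Now split into two cases.
\begin{enumerate}[label=(\alph*)]
\item If $j=i+k$, the symmetric difference is empty. The condition becomes $D_k(x)=\varnothing$, i.e.\ $R_kx=x$, which is alternative (i).
\item If $j\ne i+k$, the symmetric difference is $\{j,i+k\}$. The condition becomes $D_k(x)=\{j,i+k\}$, which in particular forces $|D_k(x)|=2$. This is alternative (ii).
\end{enumerate}

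Conversely, either alternative gives $D_k(x)=\{i+k\}\triangle\{j\}$. In (ii), $|D_k(x)|=2$ forces $j\ne i+k$, so the symmetric difference really is the two-element set. Hence $R_k(x^{(i)})=x^{(j)}$, and so $[x^{(i)}]=[x^{(j)}]$. Quantifying over $k$ gives the stated equivalence.

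The only delicate points are sign conventions. First, the rotation identity must be checked against $(R_kx)_r=x_{r-k}$, so that the flipped position moves to $i+k$ rather than $i-k$. Second, the set $D_k(x)$ is symmetric in $x$ and $R_kx$, so it needs no orientation. I would verify the identity coordinatewise first, after which the remaining argument is routine.
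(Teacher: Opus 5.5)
Your proof is correct and follows essentially the same route as the paper: both use the identity $R_k(x^{(i)})=(R_kx)^{(i+k)}$ to reduce to $(R_kx)^{(i+k)}=x^{(j)}$, then read off $D_k(x)$ in the two cases $j=i+k$ and $j\ne i+k$. Your formulation $D_k(x)=\{i+k\}\triangle\{j\}$ compactly packages the paper's step of flipping coordinate $i+k$ on both sides, and it handles both directions of the equivalence at once.
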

\begin{proof}
Suppose first that $[x^{(i)}]=[x^{(j)}]$.  By the definition of a necklace, there is some $k\in\mathbb Z_n$ such that
\[
R_k(x^{(i)})=x^{(j)}.
\]
Rotation carries the flip at coordinate $i$ to a flip at coordinate $i+k$; that is,
\[
R_k(x^{(i)})=(R_kx)^{(i+k)}.
\]
Hence
\[
(R_kx)^{(i+k)}=x^{(j)},
\]
and, after flipping coordinate $i+k$ on both sides,
\[
R_kx=(x^{(j)})^{(i+k)}.
\]
There are now two cases.

If $j=i+k$, the two flips on the right occur at the same coordinate and therefore cancel.  Thus
\[
R_kx=x,
\]
so $R_k$ stabilizes $x$, and $j=i+k$.  This is alternative~(i).

If $j\ne i+k$, then the two flips occur at distinct coordinates.  Consequently $(x^{(j)})^{(i+k)}$ differs from $x$ exactly at the two coordinates $j$ and $i+k$.  Therefore
\[
D_k(x)=\{r:x_r\ne(R_kx)_r\}=\{j,i+k\},
\]
so $|D_k(x)|=2$.  This is alternative~(ii).

Conversely, suppose that alternative~(i) holds.  Then $R_kx=x$ and $j=i+k$, so
\[
R_k(x^{(i)})=(R_kx)^{(i+k)}=x^{(j)}.
\]
Hence $[x^{(i)}]=[x^{(j)}]$.

Finally, suppose that alternative~(ii) holds.  Since
\[
D_k(x)=\{j,i+k\},
\]
the word $R_kx$ is obtained from $x$ by flipping exactly those two coordinates; equivalently,
\[
R_kx=(x^{(j)})^{(i+k)}.
\]
Flipping coordinate $i+k$ gives
\[
R_k(x^{(i)})=(R_kx)^{(i+k)}=x^{(j)},
\]
and again $[x^{(i)}]=[x^{(j)}]$.  Thus the two stated alternatives are both necessary and sufficient.
\end{proof}
Call nonzero $k$ a 2-near-period if $|D_k(x)|=2$.
\begin{theorem}[Distance-two rotation structure; Makhynko~\cite{Makhynko}]
Fix $k\ne0$, put $d=\gcd(n,k)$ and $L=n/d$. The permutation $r\mapsto r+k$ has $d$ cycles of length $L$. Then $|D_k(x)|=2$ iff $x$ is constant on $d-1$ cycles and, on the remaining cycle, has one nonempty proper consecutive arc of ones and one of zeros; equivalently its cyclic pattern is $1^a0^{L-a}$, $1\le a\le L-1$.
\end{theorem}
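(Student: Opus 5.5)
The plan is to decompose $D_k(x)$ along the cycles of the permutation $\sigma:r\mapsto r+k$ and count transitions on each cycle.

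\textbf{Cycle structure.} The map $\sigma$ is translation by $k$ in $\Z_n$. The orbit of $r$ is the coset $r+\langle k\rangle$, and $\langle k\rangle=\langle d\rangle$ has order $L=n/d$. So there are exactly $d$ cycles, each of length $L$, and they are indexed by residues mod $d$.

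\textbf{Transitions.} Since $(R_kx)_r=x_{r-k}$, we have $r\in D_k(x)$ iff $x_r\ne x_{r-k}$. Fix a cycle $C$ and list its elements in cyclic order $c_0,c_1=c_0+k,\dots,c_{L-1}$, so $c_{t+1}=c_t+k$ with indices mod $L$. Let $w_t=x_{c_t}$. Then $c_{t}\in D_k(x)$ iff $w_t\ne w_{t-1}$. Hence $|D_k(x)\cap C|$ is the number of cyclic transitions of the binary word $w$ of length $L$.

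A cyclic binary word has an even number of transitions. It has zero transitions iff it is constant. It has exactly two transitions iff it consists of one nonempty arc of ones and one nonempty arc of zeros, i.e.\ it is a rotation of $1^a0^{L-a}$ with $1\le a\le L-1$.

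\textbf{Summing over cycles.} We have $|D_k(x)|=\sum_C(\text{transitions on }C)$, and each summand is even. So the total equals $2$ iff exactly one cycle contributes $2$ and all others contribute $0$. That is exactly the statement: $x$ is constant on $d-1$ cycles, and on the remaining cycle, read in $\sigma$-order, it has the pattern $1^a0^{L-a}$ up to rotation. The converse follows from the same count read backwards.

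\textbf{Main point of care.} The only step needing attention is that ``consecutive arc'' means consecutive in the $\sigma$-order $c_0,c_0+k,c_0+2k,\dots$, not in the natural order of $\Z_n$. I would state this explicitly. I would also note that $L\ge2$ because $k\ne0$, so a proper nonempty arc can exist at all.
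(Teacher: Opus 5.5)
Your proposal is correct and follows the same route as the paper: you decompose $D_k(x)$ along the $d$ cycles of $r\mapsto r+k$, identify each cycle's contribution with the number of cyclic transitions of the restricted word, and use the evenness of transition counts to force exactly one active cycle with pattern $1^a0^{L-a}$. Your convention $r\in D_k(x)\iff x_r\ne x_{r-k}$ matches the paper's definition of $R_k$ more faithfully than the paper's own proof, which writes $x_{r+tk}\ne x_{r+(t+1)k}$; this only changes which element of a cycle records each transition and does not affect the count.
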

\begin{proof}
This is the prescribed-shift one-active-cycle structure appearing in~\cite{Makhynko}; we include the argument for completeness.

The permutation $r\mapsto r+k$ decomposes $\mathbb Z_n$ into $d=\gcd(n,k)$ disjoint cycles, each of length $L=n/d$.  Fix one such cycle and write it in cyclic order as
\[
C=(r,r+k,r+2k,\ldots,r+(L-1)k),
\]
where all indices are taken modulo $n$.  By definition,
\[
r+tk\in D_k(x)
\quad\Longleftrightarrow\quad
x_{r+tk}\ne x_{r+(t+1)k},
\]
with $t$ interpreted modulo $L$.  Hence the number of elements of $D_k(x)$ lying on $C$ is exactly the number of transitions between $0$ and $1$ in the cyclic binary word
\[
x_r,x_{r+k},x_{r+2k},\ldots,x_{r+(L-1)k}.
\]

A cyclic binary word has an even number of transitions.  Indeed, whenever the value changes from $0$ to $1$, it must later change from $1$ back to $0$ before returning to the starting position.  Thus a cycle contributes either $0$ or at least $2$ elements to $D_k(x)$.  Moreover, it contributes $0$ precisely when $x$ is constant on that cycle.

Suppose now that $|D_k(x)|=2$.  Since the cycles are disjoint and every cycle contributes an even nonnegative number of disagreements, exactly one cycle contributes $2$, while each of the remaining $d-1$ cycles contributes $0$.  Therefore $x$ is constant on those $d-1$ cycles.  On the unique active cycle there are exactly two transitions.  Starting immediately after one of them, all the $1$'s occur consecutively until the second transition, and all the remaining entries are $0$'s.  Since both transitions occur, both symbols appear.  Consequently, after a cyclic choice of starting point, the pattern on this cycle is
\[
1^a0^{L-a}
\qquad\text{for some }1\le a\le L-1.
\]
Equivalently, the $1$'s form one nonempty proper consecutive arc of the cycle and the $0$'s form the complementary arc.

Conversely, suppose that $x$ is constant on $d-1$ cycles and that on the remaining cycle its cyclic pattern is $1^a0^{L-a}$ with $1\le a\le L-1$.  Each constant cycle contributes no element to $D_k(x)$, while the active cycle has exactly two transitions, one from $1$ to $0$ and one from $0$ to $1$.  Hence that cycle contributes exactly two elements to $D_k(x)$, and therefore $|D_k(x)|=2$.
\end{proof}
\section{The multiplicity formula}
Let
\[
H_x=\Stab(x)=\{R_k:R_kx=x\},\qquad h_x=|H_x|,
\]
where $\Stab(x)$ denotes the subgroup of cyclic rotations that fix $x$. Define
\[q_i(x)=\#\{k:|D_k(x)|=2,\ i+k\in D_k(x)\}.\]
\begin{theorem}[Multiplicity formula]
If $X=[x]$ and $Y=[x^{(i)}]$, then
\[\boxed{m(X,Y)=\frac{h_x+q_i(x)}{h_{x^{(i)}}}.}\]
Hence $h_{x^{(i)}}\mid h_x+q_i(x)$.
\end{theorem}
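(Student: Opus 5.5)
We double count the set
\[
P=\{(j,k)\in\Z_n\times\Z_n:\ R_k(x^{(i)})=x^{(j)}\}.
\]
Write $y=x^{(i)}$. The idea is to count $P$ once by grouping on $j$ and once by grouping on $k$, then compare.

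\emph{Counting by $j$.} For each coordinate $j$ there are two cases. If $[x^{(j)}]=Y$, then by orbit--stabilizer the rotations carrying $y$ to the element $x^{(j)}$ of its orbit form a coset of $\Stab(y)$. Hence there are exactly $h_{y}=h_{x^{(i)}}$ values of $k$ with $(j,k)\in P$. If $[x^{(j)}]\ne Y$, there are none. By the definition of $m$, the number of $j$ with $[x^{(j)}]=Y$ is $m(X,Y)$. Therefore $|P|=m(X,Y)\,h_{x^{(i)}}$.

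\emph{Counting by $k$.} Fix $k$. The words $x^{(j)}$ for different $j$ are pairwise distinct, so for a given $k$ there is at most one $j$ with $(j,k)\in P$. We therefore need to count the $k$ for which $R_k(x^{(i)})$ is a one-flip of $x$. Using $R_k(x^{(i)})=(R_kx)^{(i+k)}$, exactly as in the proof of the flip-equivalence criterion, such a $j$ exists iff one of two conditions holds.

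\begin{itemize}
\item[(i)] $R_kx=x$, with $j=i+k$. This accounts for $h_x$ values of $k$.
\item[(ii)] $|D_k(x)|=2$ and $i+k\in D_k(x)$, with $j$ the other element of $D_k(x)$.
\end{itemize}

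To justify (ii), note that $(R_kx)^{(i+k)}=x^{(j)}$ forces $R_kx$ to differ from $x$ exactly on the symmetric difference of $\{i+k\}$ and $\{j\}$. When $j\ne i+k$ this means $D_k(x)=\{j,i+k\}$. Conversely, if $|D_k(x)|=2$ and $i+k\in D_k(x)$, this $j$ works. Moreover, $|D_k(x)|=2$ forces $k\neq 0$ and $R_kx\neq x$, so cases (i) and (ii) are disjoint. The number of $k$ in case (ii) is, by definition, $q_i(x)$. Hence $|P|=h_x+q_i(x)$.

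Equating the two counts gives $m(X,Y)\,h_{x^{(i)}}=h_x+q_i(x)$. This is the formula, and the divisibility follows at once since $m(X,Y)$ is an integer.

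The main point needing care is the counting by $k$. One must confirm that each $k$ contributes at most one $j$, and that the two alternatives neither overlap nor miss a case. In particular, alternative (ii) requires $i+k\in D_k(x)$, not merely $|D_k(x)|=2$: if $D_k(x)$ does not contain $i+k$, then $(R_kx)^{(i+k)}$ differs from $x$ in three places and is not a one-flip of $x$.
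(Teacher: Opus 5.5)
Your proposal is correct and takes essentially the same approach as the paper. Both double count the pairs $(k,j)$ with $R_k(x^{(i)})=x^{(j)}$: grouping by $j$ gives $m(X,Y)\,h_{x^{(i)}}$ via the coset of $\Stab(x^{(i)})$, and grouping by $k$ gives $h_x+q_i(x)$ via the split into stabilizing rotations and distance-two rotations with $i+k\in D_k(x)$. You also spell out that each $k$ yields at most one $j$ and that the two cases are disjoint, which the paper leaves implicit.
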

\begin{proof}
Set $y=x^{(i)}$, so that $Y=[y]$, and consider
\[
\mathcal P_i=\{(R_k,j)\in \langle R_1\rangle\times\mathbb Z_n:
R_k(x^{(i)})=x^{(j)}\}.
\]
We count $|\mathcal P_i|$ in two ways.

First fix $j$.  There exists a rotation $R_k$ with
$R_k(x^{(i)})=x^{(j)}$ if and only if $[x^{(j)}]=[x^{(i)}]=Y$.
By the definition of $m(X,Y)$, there are exactly $m(X,Y)$ such coordinates
$j$.  For each such $j$, choose one rotation $R_{k_0}$ satisfying
$R_{k_0}y=x^{(j)}$.  Then
\[
R_ky=x^{(j)}
\quad\Longleftrightarrow\quad
R_{k_0}^{-1}R_ky=y.
\]
Thus the rotations carrying $y$ to $x^{(j)}$ form a coset of
$H_y=\Stab(y)$ and hence there are exactly $h_y=h_{x^{(i)}}$ of them.
Consequently,
\[
|\mathcal P_i|=m(X,Y)h_{x^{(i)}}.
\]

We now count by fixing the rotation $R_k$.  Since rotation carries a flip
at $i$ to a flip at $i+k$,
\[
R_k(x^{(i)})=(R_kx)^{(i+k)}.
\]
Hence a coordinate $j$ with $R_k(x^{(i)})=x^{(j)}$ exists precisely when
\[
(R_kx)^{(i+k)}=x^{(j)}.
\]
If $R_kx=x$, then this equality holds with the unique choice $j=i+k$.
Therefore every stabilizing rotation contributes exactly one pair to
$\mathcal P_i$, for a total contribution of $h_x$.

Suppose next that $R_kx\ne x$.  The flip-equivalence criterion from
Theorem~3.1 shows that the above equality can hold only when
\[
|D_k(x)|=2
\quad\text{and}\quad
i+k\in D_k(x).
\]
Indeed, if $D_k(x)=\{i+k,j\}$, then flipping $i+k$ in $R_kx$ removes one
of the two disagreements with $x$, and flipping $j$ in $x$ removes the
other, giving
\[
(R_kx)^{(i+k)}=x^{(j)}.
\]
Conversely, if $(R_kx)^{(i+k)}=x^{(j)}$ and $R_kx\ne x$, then $R_kx$ and
$x$ can differ only at the two coordinates $i+k$ and $j$; hence
$D_k(x)=\{i+k,j\}$ and $|D_k(x)|=2$.  Thus the nonstabilizing rotations
that contribute to $\mathcal P_i$ are exactly those counted by $q_i(x)$,
and each contributes exactly one pair.  Therefore
\[
|\mathcal P_i|=h_x+q_i(x).
\]

Equating the two counts gives
\[
m(X,Y)h_{x^{(i)}}=h_x+q_i(x),
\]
and hence
\[
m(X,Y)=\frac{h_x+q_i(x)}{h_{x^{(i)}}}.
\]
Since $m(X,Y)$ is an integer, it follows immediately that
$h_{x^{(i)}}\mid h_x+q_i(x)$.
\end{proof}
\begin{proposition}[Balance identity]
For adjacent necklaces $X$ and $Y$,
\[
|X|m(X,Y)=|Y|m(Y,X).
\]
\end{proposition}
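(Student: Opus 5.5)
The plan is to double count the set of edges of the Boolean cube that join the two orbits. Let
\[
E(X,Y)=\{(u,i)\in X\times\Z_n:\ u^{(i)}\in Y\},
\]
the set of directed cube edges from a word of $X$ to a word of $Y$. Every edge of the cube is undirected: $(u,i)$ with $u^{(i)}=v$ corresponds bijectively to $(v,i)$ with $v^{(i)}=u$. This bijection identifies $E(X,Y)$ with $E(Y,X)$, so $|E(X,Y)|=|E(Y,X)|$.

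Next I count $E(X,Y)$ by its first coordinate. For each fixed $u\in X$, the number of $i$ with $[u^{(i)}]=Y$ is $m(X,Y)$. This uses the lemma that $m(X,Y)$ does not depend on the representative of $X$. Summing over the $|X|$ words of $X$ gives
\[
|E(X,Y)|=|X|\,m(X,Y).
\]
The same argument with the roles of $X$ and $Y$ exchanged gives $|E(Y,X)|=|Y|\,m(Y,X)$. Equating the two counts yields the identity.

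The argument needs no case analysis, and the only point requiring care is the edge-reversal bijection. If $X=Y$, the bijection maps $E(X,X)$ to itself, which is harmless because the identity is then trivially true. Adjacency only guarantees that both sides are positive. As an alternative check, one could write $|X|=n/h_x$ and $|Y|=n/h_{x^{(i)}}$ and compare with the multiplicity formula. That route would require knowing $q$ for the reverse flip, so the combinatorial double count is the cleaner route, and I will use it.
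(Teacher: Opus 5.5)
Your proposal is correct and follows essentially the same route as the paper: both double count the cube edges between the orbits $X$ and $Y$, using the representative-independence of $m(X,Y)$ to get $|X|\,m(X,Y)=|Y|\,m(Y,X)$. Your directed-edge bijection is a harmless bookkeeping variant of the paper's undirected count. Your $X=Y$ remark is vacuous, since adjacent necklaces always have Hamming weights differing by one.
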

\begin{proof}
Let $E(X,Y)$ be the set of edges of the $n$-dimensional hypercube having one endpoint in the orbit $X$ and the other endpoint in the orbit $Y$.  We count $|E(X,Y)|$ in two ways.

First fix a word $x\in X$.  By definition, $m(X,Y)$ is the number of coordinates $i$ for which the one-coordinate flip $x^{(i)}$ belongs to $Y$.  This number is independent of the chosen representative of $X$.  Indeed, if $x'=R_kx$, then rotation commutes with a coordinate flip in the sense that
\[
(R_kx)^{(i+k)}=R_k(x^{(i)}),
\]
with indices taken modulo $n$.  Since $Y$ is itself a rotation orbit, $x^{(i)}\in Y$ if and only if $(R_kx)^{(i+k)}\in Y$.  Thus every vertex of $X$ is incident with exactly $m(X,Y)$ edges whose other endpoint lies in $Y$.

Consequently, counting the edges of $E(X,Y)$ from their endpoints in $X$ gives
\[
|E(X,Y)|=|X|\,m(X,Y).
\]

The same argument with the roles of $X$ and $Y$ reversed shows that every vertex of $Y$ is incident with exactly $m(Y,X)$ edges whose other endpoint lies in $X$.  Hence
\[
|E(X,Y)|=|Y|\,m(Y,X).
\]

Equating the two expressions for the same set of hypercube edges yields
\[
|X|m(X,Y)=|Y|m(Y,X),
\]
as claimed.
\end{proof}
\section{Odd prime length}
Throughout Sections 5 and 6, assume $p$ is an odd prime. (The case $p=2$ is immediate and may be checked separately.) Every nonconstant word of length $p$ is aperiodic.
Makhynko's prime-length classification shows that the distance-two necklaces are represented by cyclic arithmetic intervals~\cite{Makhynko}.  We record the following elementary uniqueness lemma because it isolates the precise step property needed in the sensitivity arguments below.
\begin{lemma}[Uniqueness of the step]
Let $I=\{0,1,\ldots,w-1\}\subseteq\Z_p$, $2\le w\le p-2$. If $I=\{b,b+r,\ldots,b+(w-1)r\}$ with $r\ne0$, then $r=\pm1\pmod p$.
\end{lemma}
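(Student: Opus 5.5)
Since $p$ is prime and $r\ne0$, multiplication by $r$ is a bijection of $\Z_p$. So we can apply the inverse map $u\mapsto r^{-1}(u-b)$, which sends the arithmetic progression $\{b,b+r,\ldots,b+(w-1)r\}$ to the standard interval $J=\{0,1,\ldots,w-1\}$. Call this map $\phi$; it is affine, and $\phi(I)=J$. The task becomes: if an affine map $u\mapsto su+c$ with $s=r^{-1}$ carries the interval $I$ onto the interval $J$ of the same length $w$, then $s=\pm1$. I would prove this with a boundary-counting argument. For a subset $S\subseteq\Z_p$, define
\[
e_t(S)=\#\{u\in S: u+t\in S\}.
\]
This is the number of ``$t$-adjacent pairs'' inside $S$. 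For the interval $I$ we have $e_1(I)=w-1$. Since $1\le w-1$ and $w\le p-2$, every element of $I$ except the last has its successor in $I$, and the wrap-around pair $w-1\mapsto 0$ does not count because $w<p$.

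The key step is to compare this quantity across the map. If $\phi(u)=su+c$, then $u,u+1\in I$ if and only if $\phi(u),\phi(u)+s\in J$. Hence $e_s(J)=e_1(I)=w-1$. So it suffices to show that for the interval $J$ of length $w$ with $2\le w\le p-2$, the equality $e_t(J)=w-1$ forces $t=\pm1$. I would argue as follows. For $t\ne0$, the map $u\mapsto u+t$ decomposes $\Z_p$ into a single cycle of length $p$, since $p$ is prime. In that cycle, $e_t(J)=w-\beta_t$, where $\beta_t$ is the number of maximal runs of $J$ along the cycle $0,t,2t,\ldots$. Note that $J$ is a proper nonempty subset, so $\beta_t\ge1$. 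Thus $e_t(J)=w-1$ exactly when $J$ is a single arc in the $t$-ordering, i.e., an arithmetic progression with common difference $t$.

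The heart of the argument, and the main obstacle, is to show that an interval $J=\{0,\ldots,w-1\}$ with $2\le w\le p-2$ is an arithmetic progression with difference $t$ only for $t=\pm1$. I would use a complement and sumset count. Consider $|J\cap(J+t)|$, which equals $e_t(J)$ up to reindexing. For the ordinary interval, a direct computation gives $|J\cap(J+t)|=\max(w-|t|,0)+\max(w-(p-|t|),0)$, where $|t|$ denotes the representative of $t$ in $[1,p-1]$. I would then check when this equals $w-1$. The first term equals $w-1$ only when $|t|=1$. The second equals $w-1$ only when $p-|t|=1$, i.e., $t=-1$. Both terms cannot be positive at the same time with total $w-1$: that would need $2w-p=w-1$, i.e., $w=p-1$, which is excluded. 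Otherwise the total is $\le w-2$. This is exactly where the hypothesis $w\le p-2$ enters; the lower bound $w\ge2$ is what makes $e_1=w-1$ nontrivial, since for $w=1$ every step works. We conclude $s=\pm1$, and hence $r=s^{-1}=\pm1\pmod p$.
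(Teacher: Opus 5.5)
Your proof is correct and is essentially the paper's argument: both reduce to showing that $|I\cap(I+t)|=w-1$ forces $t=\pm1$, using the same unwrapped/wrapped overlap count $\max(w-t,0)+\max(w-(p-t),0)$, with $w\le p-2$ ruling out the double-overlap case $2w-p=w-1$. The only difference is that you get the overlap identity for $s=r^{-1}$ through affine normalization, whereas the paper reads $|I\cap(I+r)|=w-1$ directly off the progression with step $r$; your normalization step and the run-count paragraph are harmless but unnecessary detours.
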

\begin{proof}
Put
\[
J:=\{b,b+r,\ldots,b+(w-1)r\}\subseteq\Z_p.
\]
By hypothesis, $J=I$, where $I=\{0,1,\ldots,w-1\}$.  In the
$r$-arithmetic ordering of $J$, translation by $r$ sends
\[
b,b+r,\ldots,b+(w-2)r
\]
to the next $w-1$ elements
\[
b+r,b+2r,\ldots,b+(w-1)r.
\]
The only additional point of $J+r$ is $b+wr$, which does not lie in
$J$: otherwise $wr\equiv jr\pmod p$ for some $0\le j\le w-1$,
contradicting $r\ne0$ and $w<p$.
Hence
\[
|J\cap(J+r)|=w-1.
\]
Since $J=I$, this gives
\[
|I\cap(I+r)|=w-1.
\]
We now use the special form $I=\{0,1,\ldots,w-1\}$ to determine $r$.
Choose the integer representative $1\le r\le p-1$.

First suppose $1\le r\le p-w$.  Then the translate $I+r$ does not
wrap around modulo $p$, so
\[
I+r=\{r,r+1,\ldots,r+w-1\}.
\]
If $r\ge w$, then $I\cap(I+r)=\varnothing$, contradicting
$|I\cap(I+r)|=w-1$.  Thus $1\le r<w$, and in this case
\[
|I\cap(I+r)|=w-r.
\]
Therefore $w-r=w-1$, and hence $r=1$.

It remains to consider $p-w<r\le p-1$.  Set $s=p-r$, so
$1\le s<w$ and $r\equiv -s\pmod p$.  Thus $I+r=I-s$ in $\Z_p$.
The intersection may have two contributions: the unwrapped portion
contributes $w-s$ elements, and the wrapped portion contributes
$\max\{0,w+s-p\}$ elements.  Thus
\[
|I\cap(I+r)|=|I\cap(I-s)|
=(w-s)+\max\{0,w+s-p\}.
\]
If $w+s\le p$, the equality $|I\cap(I+r)|=w-1$ yields
$w-s=w-1$, hence $s=1$.  If $w+s>p$, the equality becomes
$2w-p=w-1$, which forces $w=p-1$, contrary to
$w\le p-2$.  Therefore only $s=1$ is possible, giving
$r=p-1\equiv-1\pmod p$.

Consequently $r\equiv\pm1\pmod p$, as claimed.
\end{proof}
\begin{corollary}[Prime arithmetic-interval uniqueness]\label{cor:prime-interval-uniqueness}
If $2\le|S|=w\le p-2$ and $S$ has arithmetic-progression representations with nonzero steps $k$ and $\ell$, then $\ell=\pm k$.
\end{corollary}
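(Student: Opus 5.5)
The plan is to reduce to the uniqueness lemma by applying an affine change of variables that turns the $k$-representation of $S$ into the standard interval $I=\{0,1,\ldots,w-1\}$.

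Suppose
\[
S=\{a,a+k,\ldots,a+(w-1)k\}=\{b,b+\ell,\ldots,b+(w-1)\ell\},
\]
with $k,\ell\ne0$ in $\Z_p$. Since $p$ is prime, $k$ is invertible modulo $p$. Consider the affine bijection
\[
\varphi(z)=k^{-1}(z-a)
\]
of $\Z_p$. It sends the first representation onto $\varphi(S)=\{0,1,\ldots,w-1\}=I$.

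The same map sends $b+t\ell$ to $k^{-1}(b-a)+t\,k^{-1}\ell$. Hence $I$ also has the arithmetic-progression representation
\[
I=\{b',b'+r,\ldots,b'+(w-1)r\},\qquad b'=k^{-1}(b-a),\quad r=k^{-1}\ell .
\]
Here $r\ne0$ because $\ell\ne0$ and $k$ is invertible. The cardinality is preserved because $\varphi$ is a bijection, so the hypothesis $2\le w\le p-2$ still holds.

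We may therefore apply the uniqueness-of-step lemma to $I$ with this $b'$ and $r$. It gives $r\equiv\pm1\pmod p$, that is, $k^{-1}\ell=\pm1$, so $\ell=\pm k$.

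The only point needing care is confirming that $\varphi$ preserves the arithmetic-progression structure and the step nonvanishing. This is immediate because multiplication by a unit and translation are bijective and act affinely on indices. So I expect no real obstacle; the substantive work was done in the preceding lemma.
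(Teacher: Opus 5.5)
Your proposal is correct and follows the paper's proof essentially verbatim. The affine normalization $z\mapsto k^{-1}(z-a)$ carries the $k$-representation to $I=\{0,1,\ldots,w-1\}$ and the $\ell$-representation to a progression with step $k^{-1}\ell\neq 0$, and the uniqueness-of-step lemma then gives $k^{-1}\ell\equiv\pm1$, hence $\ell=\pm k$.
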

\begin{proof}
Write the two representations as
\[
S=\{a,a+k,\ldots,a+(w-1)k\}
 =\{b,b+\ell,\ldots,b+(w-1)\ell\}
 \subseteq \mathbb Z_p.
\]
Since $k\ne0$ and $p$ is prime, $k$ is invertible modulo $p$.  Apply the
invertible affine map
\[
\phi(z)=k^{-1}(z-a).
\]
The first representation is carried to the standard interval
\[
I=\{0,1,\ldots,w-1\}.
\]
The second representation is carried to
\[
I=\{c,c+r,\ldots,c+(w-1)r\},
\qquad
c=k^{-1}(b-a),\quad r=k^{-1}\ell.
\]
Thus the same set $I$ is an arithmetic progression of length $w$ with
nonzero step $r$.  By the preceding lemma, $r\equiv\pm1\pmod p$.
Therefore
\[
k^{-1}\ell\equiv\pm1\pmod p,
\]
and multiplying by $k$ gives $\ell\equiv\pm k\pmod p$, as claimed.
\end{proof}
\begin{lemma}[Prime near-period lemma]
If $2\le\wt(x)\le p-2$, then $q_i(x)\le1$ for every $i$.
\end{lemma}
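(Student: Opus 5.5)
The plan is to use primality to pin down the 2-near-periods of $x$ completely, and then to show that at most one of them can satisfy the extra condition $i+k\in D_k(x)$. Throughout, set $S=\supp(x)$ and $w=\wt(x)$, so $2\le w\le p-2$.

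\emph{Step 1: structure of each 2-near-period.} Since $p$ is prime, every $k\ne0$ has $\gcd(p,k)=1$. So in the distance-two rotation theorem we have $d=1$ and $L=p$: there is a single cycle $a,a+k,a+2k,\dots$. That theorem then says $|D_k(x)|=2$ iff, read along this cycle, $x$ has the pattern $1^w0^{p-w}$. Equivalently,
\[
S=\{a,a+k,\ldots,a+(w-1)k\}
\]
for some $a$, i.e.\ $S$ is an arithmetic progression with step $k$.

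\emph{Step 2: at most two candidate steps.} If $k$ and $\ell$ are both 2-near-periods, then $S$ has arithmetic-progression representations with steps $k$ and $\ell$. Corollary~\ref{cor:prime-interval-uniqueness} applies because $2\le w\le p-2$, and gives $\ell=\pm k$. Hence the only rotations that can contribute to $q_i(x)$ are $k$ and $-k$ for one fixed $k$. These two are distinct because $p$ is odd. So $q_i(x)\le 2$ already, and it remains to show that $k$ and $-k$ cannot both contribute for the same $i$.

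\emph{Step 3: the two contributions are incompatible (the key step).} Since $(R_kx)_r=x_{r-k}$, we have $D_k(x)=\{r:x_r\ne x_{r-k}\}$. With $S$ written as in Step 1, the two transitions along the cycle give
\[
D_k(x)=\{a,\ a+wk\},\qquad D_{-k}(x)=\{a-k,\ a+(w-1)k\}.
\]
Translating, the condition $i+k\in D_k(x)$ says $i\in\{a-k,\ a+(w-1)k\}$. The condition $i-k\in D_{-k}(x)$ says $i\in\{a,\ a+wk\}$. Writing all four points as $a+ck$ with $k$ invertible mod $p$, their coefficients are $c=-1,\ w-1,\ 0,\ w$. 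These are pairwise distinct modulo $p$ exactly when $w\not\equiv 0$, $w\not\equiv 1$ and $w\not\equiv -1\pmod p$, which holds since $2\le w\le p-2$. Hence the two conditions are disjoint, no $i$ is counted by both $k$ and $-k$, and $q_i(x)\le1$.

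I expect the main care to be in Step 3: getting the sign conventions for $R_k$ and $D_k$ right, and checking that the endpoint cases are excluded precisely by the bounds $w\ge2$ and $w\le p-2$. The case $w=p-1$, where $w\equiv-1$, is where the bound genuinely fails, consistent with the hypothesis of the lemma.
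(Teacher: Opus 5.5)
Your proof is correct and follows essentially the same route as the paper. It uses the distance-two structure with $d=1$ to make $\supp(x)$ an arithmetic progression with step $k$, the prime arithmetic-interval uniqueness corollary to restrict the candidate steps to $\pm k$, and the explicit sets $D_k(x)=\{a,a+wk\}$ and $D_{-k}(x)=\{a-k,a+(w-1)k\}$ to show that the two contribution conditions on $i$ are disjoint because $w\not\equiv\pm1\pmod p$.
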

\begin{proof}
Let $S=\supp(x)$ and put $w=|S|=\wt(x)$.  Recall that $q_i(x)$ counts the
nonzero rotations $R_k$ for which
\[
|D_k(x)|=2\qquad\text{and}\qquad i+k\in D_k(x).
\]
Suppose first that a nonzero step $k$ contributes to $q_i(x)$.  By the
distance-two rotation structure, the support of $x$ is a cyclic arithmetic
interval with step $k$.  Thus, for a suitable $a\in\mathbb Z_p$,
\[
S=\{a,a+k,\ldots,a+(w-1)k\}.
\]
Under translation by $k$, all interior points of this progression remain in
$S$; the only membership changes occur at its two ends.  Consequently
\[
D_k(x)=\{a,a+wk\}.
\]
Since $k$ contributes to $q_i(x)$ precisely when $i+k\in D_k(x)$, we must
have
\[
i+k=a\quad\text{or}\quad i+k=a+wk.
\]
Equivalently,
\[
i\in\{a-k,\,a+(w-1)k\}. \tag{5.1}
\]

Now suppose that a second nonzero step $\ell\ne k$ also contributes to
$q_i(x)$.  Then $\ell$ is another step giving an arithmetic-progression
representation of the same support $S$.  By Corollary~\ref{cor:prime-interval-uniqueness} (prime
arithmetic-interval uniqueness), $\ell=\pm k$.  Because $\ell\ne k$, the
only possibility is $\ell=-k$.

To determine when $-k$ can contribute, write the same progression in the
reverse direction:
\[
S=\{a+(w-1)k,\,a+(w-2)k,\ldots,a\}.
\]
For the step $-k$, the two disagreement positions are therefore
\[
D_{-k}(x)=\{a+(w-1)k,\,a-k\}.
\]
The condition $i-k\in D_{-k}(x)$ is equivalent to
\[
i\in\{a+wk,\,a\}. \tag{5.2}
\]

It remains to observe that the two possible sets of indices in (5.1) and
(5.2) are disjoint when $2\le w\le p-2$.  Indeed, an equality between an
element of
\[
\{a-k,a+(w-1)k\}
\quad\text{and an element of}\quad
\{a,a+wk\}
\]
would, after cancelling $a$ and using $k\ne0$ in $\mathbb Z_p$, force one
of
\[
-1\equiv0,\qquad -1\equiv w,\qquad w-1\equiv0,
\qquad w-1\equiv w \pmod p.
\]
The first and last are impossible, while the middle two give respectively
$w=p-1$ and $w=1$, both excluded by $2\le w\le p-2$.

Hence no fixed coordinate $i$ can satisfy the contribution condition for
both $k$ and $-k$.  Since every other possible near-period step is ruled out
by Corollary~\ref{cor:prime-interval-uniqueness}, at most one nonzero rotation contributes to $q_i(x)$.
Therefore
\[
q_i(x)\le1,
\]
as claimed.
\end{proof}
\begin{theorem}[Prime multiplicities]
If $2\le\wt(x)\le p-2$, then
\[m([x],[x^{(i)}])=1+q_i(x)\in\{1,2\}.\]
From the constant necklace to the unique adjacent weight-one (respectively weight-$(p-1)$) necklace, the directed multiplicity is $p$.
\end{theorem}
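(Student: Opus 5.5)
The plan is to specialize the multiplicity formula of Theorem~4.1 to prime length. Each stabilizer order will be computed from the remark at the start of Section~5: a nonconstant word of length $p$ has trivial rotational stabilizer. The prime near-period lemma will then control the remaining term $q_i(x)$.

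For the first claim, assume $2\le\wt(x)\le p-2$. Then $x$ is nonconstant, so $h_x=1$. The flipped word $x^{(i)}$ has weight $\wt(x)\pm1\in\{1,\ldots,p-1\}$, so it is also nonconstant and $h_{x^{(i)}}=1$. Theorem~4.1 therefore gives
\[
m([x],[x^{(i)}])=\frac{h_x+q_i(x)}{h_{x^{(i)}}}=1+q_i(x).
\]
The prime near-period lemma gives $q_i(x)\le 1$. Since $q_i(x)\ge0$ trivially, we get $m([x],[x^{(i)}])\in\{1,2\}$.

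For the second claim, let $x$ be the constant word $0^p$; the all-ones case is symmetric under complementation. Every flip $x^{(i)}$ is the weight-one word $e_i$. These words are all rotations of one another, so they form a single necklace, and that necklace is the unique neighbor of $[0^p]$. Since all $p$ coordinates lead to it, $m=p$ directly from the definition. As a consistency check I will also run this through the formula: here $h_x=p$, $h_{e_i}=1$ (because $e_i$ is nonconstant), and $q_i(x)=0$ (because $D_k(x)=\varnothing$ for every $k$), which gives $m=(p+0)/1=p$. The weight-$(p-1)$ case is identical with the roles of $0$ and $1$ exchanged.

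I do not expect a real obstacle, since all the content sits in the previous lemma. The only points needing care are confirming that the flipped word is never constant under the weight hypothesis, and noting that the aperiodicity of nonconstant prime-length words is used for both $x$ and $x^{(i)}$. The aperiodicity fact itself follows because $\Stab(x)$ is a subgroup of $\Z_p$, so it is either trivial or all of $\Z_p$, and the latter forces $x$ to be constant.
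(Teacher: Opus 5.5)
Your proposal is correct and follows the paper's proof essentially step for step. Trivial stabilizers for both $x$ and $x^{(i)}$ reduce Theorem~4.1 to $1+q_i(x)$, and the prime near-period lemma gives $q_i(x)\le 1$. The boundary multiplicity $p$ is read off directly from the fact that all weight-one words form a single necklace. Your two additions are correct and are left implicit in the paper: the formula-based check ($h_{0^p}=p$, $q_i=0$, $h_{e_i}=1$) and the subgroup argument for aperiodicity of nonconstant prime-length words.
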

\begin{proof}
Assume first that $2\le \wt(x)\le p-2$.  Since $p$ is prime and $x$ is
nonconstant, the only rotation fixing $x$ is the identity; hence $h_x=1$.
Moreover, flipping one coordinate changes the weight by one, so
$1\le \wt(x^{(i)})\le p-1$.  Thus $x^{(i)}$ is also nonconstant, and therefore
$h_{x^{(i)}}=1$.

The multiplicity formula of Theorem~4.1 consequently reduces to
\[
m([x],[x^{(i)}])
=\frac{h_x+q_i(x)}{h_{x^{(i)}}}
=1+q_i(x).
\]
By the prime near-period lemma, $q_i(x)\le 1$ for every $i$.  Since $q_i(x)$
is a nonnegative integer, $q_i(x)\in\{0,1\}$, and hence
\[
m([x],[x^{(i)}])\in\{1,2\}.
\]
In particular, multiplicity two occurs exactly when $q_i(x)=1$, that is,
when there is a nontrivial rotation $R_k$ with $|D_k(x)|=2$ and
$i+k\in D_k(x)$.

It remains to consider the extreme layers.  The constant word $0^p$ has a
single necklace, while flipping any one of its $p$ coordinates produces a
weight-one word.  All weight-one words are rotations of one another, so all
$p$ coordinate flips land in the unique weight-one necklace.  Therefore the
directed multiplicity from $[0^p]$ to that necklace is $p$.  The same
argument, with zeros and ones interchanged, shows that the directed
multiplicity from $[1^p]$ to the unique weight-$(p-1)$ necklace is also $p$.
\end{proof}
\section{Sensitivity for odd prime length}
For necklaces $X$ and $Y$, write $Y\sim X$ if they are neighboring necklaces, meaning that there exist representatives $x\in X$ and $y\in Y$ with Hamming distance $d_H(x,y)=1$; equivalently, $Y=[x^{(i)}]$ for some $x\in X$ and some coordinate $i\in\mathbb Z_n$.  For a necklace $X$, let
\[B_f(X)=\#\{Y:Y\sim X,\ \bar f(Y)\ne\bar f(X)\}\]
be the number of bichromatic neighboring necklaces of $X$.  Recall from Section~2 that $s(f,X)=s(f,x)$ for any representative $x\in X$; this is well defined because sensitivity is constant on a necklace.  For an interior necklace, let
\[N_f(X)=\#\{Y:Y\sim X,\ \bar f(Y)\ne\bar f(X),\ m(X,Y)=2\}.\]
\begin{theorem}[Prime local sensitivity formula]\label{thm:prime-local}
If $2\le\wt(X)\le p-2$, then
\[\boxed{s(f,X)=B_f(X)+N_f(X).}\]
\end{theorem}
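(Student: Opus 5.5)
The plan is to combine the necklace sensitivity formula of Section~2 with the prime multiplicity theorem of Section~5. Fix a necklace $X$ with $2\le\wt(X)\le p-2$. By the necklace sensitivity formula,
\[
s(f,X)=\sum_{\bar f(Y)\ne\bar f(X)}m(X,Y).
\]
Here the sum may be taken over all necklaces $Y$, since $m(X,Y)=0$ unless $Y\sim X$. So the sum is effectively over the bichromatic neighbors of $X$, and there are $B_f(X)$ of them.

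Next I evaluate each term. If $Y\sim X$, then $Y=[x^{(i)}]$ for some representative $x$ and coordinate $i$. By the prime multiplicities theorem, $m(X,Y)=1+q_i(x)\in\{1,2\}$. The hypothesis $2\le \wt(x)\le p-2$ is exactly the one needed there. This step also relies on the fact that $m(X,Y)$ is a well-defined function of the pair $(X,Y)$, independent of the choice of $i$ with $[x^{(i)}]=Y$ and of the representative $x$. That independence is the lemma of Section~2 together with the definition of $m$.

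Writing each term as $m(X,Y)=1+\mathbf 1[m(X,Y)=2]$, the sum splits as
\[
s(f,X)=\#\{Y\sim X:\bar f(Y)\ne\bar f(X)\}+\#\{Y\sim X:\bar f(Y)\ne\bar f(X),\ m(X,Y)=2\}.
\]
The first count is $B_f(X)$ and the second is $N_f(X)$, which gives the boxed formula.

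The only point needing care, and the main obstacle if any, is the value $m(X,Y)\in\{1,2\}$ for every neighbor $Y$. This needs $h_x=h_{x^{(i)}}=1$, which holds because every neighbor of an interior necklace has weight between $1$ and $p-1$ and so is aperiodic. It also needs $q_i(x)\le1$, which is the prime near-period lemma. Both facts are already packaged in the prime multiplicities theorem, so I would simply cite it.
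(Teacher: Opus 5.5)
Your proposal is correct and follows the paper's own argument. Both apply the necklace sensitivity formula and invoke the prime multiplicity theorem to get $m(X,Y)\in\{1,2\}$ for every neighbor. Both then split the sum into the number of bichromatic neighbors, $B_f(X)$, plus the number of doubled bichromatic neighbors, $N_f(X)$.
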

\begin{proof}
Fix a representative $x\in X$.  By the necklace sensitivity formula from Section~2,
\[
s(f,X)=\sum_{\bar f(Y)\ne\bar f(X)}m(X,Y).
\]
Only neighboring necklaces $Y\sim X$ can have $m(X,Y)>0$, so the sum is over the bichromatic neighbors of $X$.  Because $2\le\wt(X)\le p-2$, both $X$ and every necklace obtained from $X$ by one coordinate flip are nonconstant.  The prime-length multiplicity result proved immediately above therefore gives
\[
m(X,Y)\in\{1,2\}
\]
for every neighbor $Y$ of $X$.

Now separate the bichromatic neighbors according to their multiplicity.  Each bichromatic neighbor contributes at least one to the sensitivity sum, and the number of such neighbors is exactly $B_f(X)$.  A bichromatic neighbor with multiplicity two contributes one further unit.  By definition, the number of these doubled bichromatic neighbors is $N_f(X)$.  Consequently,
\[
\begin{aligned}
s(f,X)
&=\sum_{\substack{Y\sim X\\ \bar f(Y)\ne\bar f(X)}}m(X,Y)\\
&=\sum_{\substack{Y\sim X\\ \bar f(Y)\ne\bar f(X)}}1
 +\#\{Y:Y\sim X,\ \bar f(Y)\ne\bar f(X),\ m(X,Y)=2\}\\
&=B_f(X)+N_f(X).
\end{aligned}
\]
This proves the formula.
\end{proof}

Suppose $S=\supp(x)$ is a cyclic arithmetic interval
\[S=\{a,a+k,\ldots,a+(w-1)k\},\qquad 2\le w\le p-2.\]
The two possible doubled neighboring necklaces are
\[Y^+=[x^{(a-k)}]=[x^{(a+wk)}],\]
\[Y^-=[x^{(a)}]=[x^{(a+(w-1)k)}].\]
For necklaces $X,Y$, put $\delta_f(X,Y)=1$ if $\bar f(X)\ne\bar f(Y)$ and $0$ otherwise.

\begin{corollary}[Explicit prime formula]\label{cor:explicit-prime}
Let $2\le\wt(X)\le p-2$.
If $\supp(x)$ is not a cyclic arithmetic interval, then
\[\boxed{s(f,X)=B_f(X).}\]
If $\supp(x)$ is a cyclic arithmetic interval as above, then
\[\boxed{s(f,X)=B_f(X)+\delta_f(X,Y^-)+\delta_f(X,Y^+).}\]
In particular,
\[0\le s(f,X)-B_f(X)\le2.\]
\end{corollary}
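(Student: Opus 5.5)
The plan is to start from the prime local sensitivity formula (Theorem~\ref{thm:prime-local}), $s(f,X)=B_f(X)+N_f(X)$. What remains is to identify exactly which neighbors $Y$ of $X$ satisfy $m(X,Y)=2$. By the prime multiplicities theorem, $m([x],[x^{(i)}])=1+q_i(x)$. So a neighbor $Y$ is doubled precisely when $Y=[x^{(i)}]$ for some coordinate $i$ with $q_i(x)=1$. Equivalently, there must be a nonzero $k$ with $|D_k(x)|=2$ and $i+k\in D_k(x)$.

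\textbf{The non-interval case.} If $\supp(x)$ is not a cyclic arithmetic interval, then no nonzero $k$ has $|D_k(x)|=2$. This follows from the distance-two rotation structure theorem, since for prime $p$ and $k\neq0$ there is a single cycle of length $p$. On that cycle, $|D_k(x)|=2$ forces the support to be an arc $\{a,a+k,\dots,a+(w-1)k\}$. Hence $q_i(x)=0$ for all $i$, so $N_f(X)=0$ and $s(f,X)=B_f(X)$.

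\textbf{The interval case.} Suppose $S=\{a,a+k,\dots,a+(w-1)k\}$ with $2\le w\le p-2$. By Corollary~\ref{cor:prime-interval-uniqueness}, the only steps $\ell$ giving $|D_\ell(x)|=2$ are $\ell=\pm k$. The proof of the prime near-period lemma computed $D_k(x)=\{a,a+wk\}$ and $D_{-k}(x)=\{a+(w-1)k,a-k\}$. From these, the coordinates with $q_i(x)=1$ are exactly the four indices in (5.1) and (5.2): $a-k$, $a+(w-1)k$, $a$ and $a+wk$.

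These four coordinates pair up into the necklaces $Y^+$ and $Y^-$. I will verify the pairing by the flip-equivalence criterion, Theorem~3.1, case (ii).
\begin{itemize}
\item For $i=a-k$ with step $k$, we have $i+k=a\in D_k(x)$, and the other element of $D_k(x)$ is $j=a+wk$. Hence $[x^{(a-k)}]=[x^{(a+wk)}]=Y^+$.
\item For $i=a$ with step $-k$, we have $i-k=a-k\in D_{-k}(x)$, and the other element is $j=a+(w-1)k$. Hence $[x^{(a)}]=[x^{(a+(w-1)k)}]=Y^-$.
\end{itemize}
Both $Y^+$ and $Y^-$ therefore have multiplicity $2$, consistent with $m=1+q_i=2$.

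All other coordinates $i$ have $q_i(x)=0$ and hence multiplicity $1$, so they yield neighbors $Y\ne Y^\pm$. Any neighbor equal to $Y^\pm$ is already accounted for: its multiplicity counts all coordinates reaching it, and that count is exactly $2$. Next I check $Y^+\neq Y^-$. Flipping $a-k$ adds a point, giving weight $w+1$, while flipping $a$ removes one, giving weight $w-1$. So the two necklaces have different weights and are distinct.

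Therefore the set of doubled neighbors is exactly $\{Y^+,Y^-\}$, and
\[
N_f(X)=\delta_f(X,Y^+)+\delta_f(X,Y^-).
\]
The bound $0\le s(f,X)-B_f(X)\le2$ follows immediately in both cases.

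The main obstacle is the bookkeeping that identifies the doubled neighbors as exactly these two necklaces. This requires showing that no other neighbor can have multiplicity $2$, which is handled by $m=1+q_i$ together with the fact that $q_i=0$ off the four listed indices. It also requires showing the two pairs give distinct necklaces, which is settled by the weight argument above.
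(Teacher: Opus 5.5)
Your proposal is correct and follows essentially the same route as the paper. You reduce to identifying the doubled neighbors via $m=1+q_i(x)$, and you rule these out in the non-interval case by the distance-two structure. In the interval case you use uniqueness of the step $\pm k$ and the flip-equivalence criterion to pair the four endpoint coordinates into $Y^+$ and $Y^-$. Your weight argument that $Y^+\ne Y^-$ (weights $w+1$ versus $w-1$) is a detail the paper leaves implicit. It is exactly what justifies counting the two $\delta_f$ terms separately.
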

\begin{proof}
By Theorem~\ref{thm:prime-local},
\[
s(f,X)=B_f(X)+N_f(X),
\]
so it remains only to identify the neighboring necklaces that can have
multiplicity two.

Suppose first that $\supp(x)$ is not a cyclic arithmetic interval.  If some
neighbor $Y=[x^{(i)}]$ had $m(X,Y)=2$, then the prime multiplicity formula
would give $q_i(x)=1$.  Thus $x$ would admit a nontrivial rotation at Hamming
distance two.  By the distance-two structure from Section~3, its support
would then have to be a cyclic arithmetic interval, a contradiction.
Hence every neighboring necklace has multiplicity one, so $N_f(X)=0$ and
\[
s(f,X)=B_f(X).
\]

Now suppose
\[
\supp(x)=\{a,a+k,\ldots,a+(w-1)k\},\qquad 2\le w\le p-2.
\]
For the rotation with step $k$, the two endpoints of the resulting
distance-two disagreement give the flip-equivalence
\[
[x^{(a-k)}]=[x^{(a+wk)}]=Y^+.
\]
Similarly, using the reverse step $-k$ gives
\[
[x^{(a)}]=[x^{(a+(w-1)k)}]=Y^-.
\]
Thus $Y^+$ and $Y^-$ are precisely the two possible doubled neighboring
necklaces.  The prime arithmetic-interval uniqueness result shows that any
other nontrivial distance-two rotation has step $\pm k$, so no third doubled
neighbor can occur.  Consequently the doubled bichromatic neighbors are
exactly those among $Y^-$ and $Y^+$ for which the value of $\bar f$ differs
from $\bar f(X)$.  Therefore
\[
N_f(X)=\delta_f(X,Y^-)+\delta_f(X,Y^+),
\]
and substitution into Theorem~\ref{thm:prime-local} yields
\[
s(f,X)=B_f(X)+\delta_f(X,Y^-)+\delta_f(X,Y^+).
\]
Since each $\delta_f$ is either $0$ or $1$, the final bound
\[
0\le s(f,X)-B_f(X)\le2
\]
follows immediately.
\end{proof}

For $0\le r\le p$, let $X_r$ denote a necklace of Hamming weight $r$ when the necklace is unique; in particular, $X_0=[0^p]$, $X_1$ is the unique weight-one necklace, $X_{p-1}$ is the unique weight-$(p-1)$ necklace, and $X_p=[1^p]$.

\begin{proposition}[The four boundary layers]
For an odd prime $p$,
\[
s(f,X_0)=p\,\delta_f(X_0,X_1),
\qquad
s(f,X_p)=p\,\delta_f(X_p,X_{p-1}).
\]
Moreover,
\[
\boxed{
s(f,X_1)=\delta_f(X_1,X_0)
+2\!\sum_{\substack{Y\sim X_1\\ \wt(Y)=2}}
\delta_f(X_1,Y),
}
\]
and
\[
\boxed{
s(f,X_{p-1})=\delta_f(X_{p-1},X_p)
+2\!\sum_{\substack{Y\sim X_{p-1}\\ \wt(Y)=p-2}}
\delta_f(X_{p-1},Y).
}
\]
In each of the two sums there are exactly $(p-1)/2$ distinct neighboring necklaces.
\end{proposition}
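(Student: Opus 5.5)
The plan is to apply the necklace sensitivity formula of Section~2 directly, computing each multiplicity from the multiplicity formula of Theorem~4.1, and using the fact that for prime $p$ every nonconstant word has trivial stabilizer. The extreme layers are immediate. Every flip of $0^p$ lands in $X_1$, and every flip of $1^p$ lands in $X_{p-1}$; this is the directed-multiplicity-$p$ statement of the prime multiplicities theorem. Hence $s(f,X_0)=p\,\delta_f(X_0,X_1)$ and $s(f,X_p)=p\,\delta_f(X_p,X_{p-1})$.

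For $X_1$, take the representative $x=e_0$. Flipping coordinate $0$ gives $0^p$, so $m(X_1,X_0)=1$. This is also consistent with the balance identity, since $|X_1|m(X_1,X_0)=p\cdot1=|X_0|\cdot p$. Flipping coordinate $i\neq 0$ gives the weight-two word with support $\{0,i\}$. Two such flips at $i$ and $j$ land in the same necklace exactly when $\{0,i\}$ and $\{0,j\}$ are rotations of each other. Since a $2$-subset of $\Z_p$ is determined up to rotation by its cyclic distance, this happens exactly when $j=\pm i$. Because $p$ is odd, $i\neq -i$, so the $p-1$ nonzero coordinates split into $(p-1)/2$ pairs $\{i,-i\}$. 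Each pair gives a distinct weight-two necklace of multiplicity $2$. Alternatively, the multiplicity formula gives $h_x=h_{x^{(i)}}=1$ and $q_i(x)=1$, where the relevant near-period is $k=-i$, since $e_0$ is an arithmetic interval of length one for every step. Every weight-two necklace arises this way, and summing $\delta_f$ times the multiplicity over these necklaces yields the boxed formula for $X_1$.

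The $X_{p-1}$ case follows by complementation. Complementing coordinatewise commutes with rotations and with flips, so it induces a multiplicity-preserving bijection of the necklace quotient swapping weight $r$ with weight $p-r$. Applying the $X_1$ computation to the function $f(\bar x)$ transfers the formula verbatim, and in particular gives exactly $(p-1)/2$ weight-$(p-2)$ neighbors, each with multiplicity $2$.

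The only step needing care is the claim that $\{0,i\}$ and $\{0,j\}$ are rotation-equivalent iff $j=\pm i$. A rotation by $t$ carrying $\{0,i\}$ to $\{0,j\}$ either fixes $0$, which forces $t=0$ and $j=i$, or sends $i\mapsto0$ and $0\mapsto j$, which forces $t=-i$ and $j=-i$. I expect no genuine obstacle beyond this bookkeeping.
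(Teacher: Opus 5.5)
Your proposal is correct and follows essentially the same route as the paper. The constant layers come from all $p$ flips landing in a single necklace. The $X_1$ case uses the representative $e_0$ together with the fact that $\{0,d\}$ and $\{0,e\}$ are rotation-equivalent iff $e\equiv\pm d\pmod p$. The $X_{p-1}$ case follows by complementation. Your extra cross-check via the multiplicity formula ($h_x=h_{x^{(i)}}=1$ and $q_i(e_0)=1$ with near-period $k=-i$) is correct but not needed.
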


\begin{proof}
The constant-word formulas are immediate.  For $X_1$, choose the representative whose unique $1$ is at coordinate $0$.  Flipping coordinate $0$ gives $X_0$, with multiplicity $1$.  Flipping a zero coordinate $d\ne0$ gives the weight-two support $\{0,d\}$.  Two such supports $\{0,d\}$ and $\{0,e\}$ are rotation-equivalent if and only if $e\equiv d$ or $e\equiv-d\pmod p$.  Since $p$ is odd, the $p-1$ nonzero residues therefore split into $(p-1)/2$ pairs $\{d,-d\}$, and every resulting weight-two neighboring necklace has multiplicity $2$.  This proves the formula for $X_1$.  The formula for $X_{p-1}$ follows either by the same argument or by complementing all bits.
\end{proof}

\begin{theorem}[Complete odd-prime sensitivity formula]
Let $p$ be an odd prime and let $f$ be rotation symmetric on $p$ variables.  The local sensitivity at every necklace is given as follows:
\begin{enumerate}[label=(\roman*)]
\item at weights $0$ and $p$, by the first two formulas in the boundary-layer proposition;
\item at weights $1$ and $p-1$, by the two boxed boundary-layer formulas;
\item at weights $2,\ldots,p-2$, by Theorem~\ref{thm:prime-local} and Corollary~\ref{cor:explicit-prime}.
\end{enumerate}
Consequently, $s(f)$ is the maximum of these explicitly determined local quantities over the necklaces of length $p$.
\end{theorem}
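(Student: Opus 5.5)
The plan is to assemble the result from the pieces already proved, organised as a case split on the Hamming weight $w=\wt(X)\in\{0,1,\dots,p\}$. Weight is constant on a necklace, so these layers partition $\Ncal_p$ and every necklace falls into exactly one of the three cases (i)--(iii). Because $p$ is an odd prime we have $p\ge3$, so $1\le p-2$. The layers $0,1,p-1,p$ are therefore pairwise distinct, and the interior range $2,\dots,p-2$ is empty only when $p=3$. In that case every necklace is a boundary necklace, and the claim still holds with (iii) vacuous. I will note this explicitly so that the case analysis is exhaustive without gaps or overlaps.

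For case (i), I will invoke the first two formulas of the boundary-layer proposition. Their justification is the multiplicity-$p$ statement in the prime multiplicities theorem combined with the necklace sensitivity formula. For case (ii), I will quote the two boxed formulas of that proposition. These rest on the count of the $(p-1)/2$ weight-two neighbouring necklaces $\{0,\pm d\}$, each of multiplicity $2$, together with the single flip back to $X_0$ of multiplicity $1$. The weight-$(p-1)$ layer follows by complementation, which commutes with rotation and preserves sensitivity. For case (iii), I will apply Theorem~\ref{thm:prime-local} to write $s(f,X)=B_f(X)+N_f(X)$. Then Corollary~\ref{cor:explicit-prime} identifies $N_f(X)$ explicitly: it equals $0$ when the support is not a cyclic arithmetic interval, and $\delta_f(X,Y^-)+\delta_f(X,Y^+)$ when it is.

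The final sentence follows from the definition $s(f)=\max_x s(f,x)$ and the fact, recorded in Section~2, that $s(f,\cdot)$ is constant on necklaces. Hence $s(f)=\max_{X\in\Ncal_p}s(f,X)$, and each term has just been given by an explicit formula.

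The only point needing care, and the main obstacle, is the well-definedness of the objects used in case (iii). The description of $Y^\pm$ depends on the choice of arithmetic-progression representation $(a,k)$ of the support. I would check that replacing $k$ by $-k$ (with $a$ replaced by $a+(w-1)k$) simply swaps $Y^+$ and $Y^-$. By Corollary~\ref{cor:prime-interval-uniqueness}, no other steps occur, so the unordered pair $\{Y^-,Y^+\}$, and hence the formula, is independent of choices. Independence of the representative $x\in X$ is handled by rotation equivariance, as in the lemma of Section~2.
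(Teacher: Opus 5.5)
Your proof is correct and is essentially the paper's own argument. The paper gives no separate proof here: the theorem just assembles the boundary-layer proposition for weights $0,1,p-1,p$, Theorem~\ref{thm:prime-local} with Corollary~\ref{cor:explicit-prime} for weights $2,\ldots,p-2$, and $s(f)=\max_X s(f,X)$.

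One small slip is in your optional well-definedness check. Passing to the reversed representation $(a+(w-1)k,-k)$ fixes $Y^+$ and $Y^-$ individually rather than swapping them; no swap is possible, since they have different weights $w+1$ and $w-1$. This still gives the choice-independence you want.
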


\subsection{Two consequences of the prime formula}

The complete prime-length formula gives a simple characterization of when maximum sensitivity occurs.

\begin{corollary}[A criterion for submaximal sensitivity]
Let $p$ be an odd prime and let $f$ be rotation symmetric on $p$ variables.  Then
\[
s(f)<p
\]
if and only if every binary necklace $X$ of length $p$ has at least one neighboring necklace $Y$ with
\[
\bar f(X)=\bar f(Y).
\]
In particular, $s(f)<p$ implies that the constant necklace and the unique weight-one necklace have the same value, and likewise at the upper end of the Hamming layers.
\end{corollary}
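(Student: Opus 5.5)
The plan is to reduce the statement to a fact about local sensitivity at each necklace, using the necklace sensitivity formula. A necklace $X$ satisfies $s(f,X)=p$ exactly when every one of the $p$ coordinate flips of a representative changes the value. By the necklace sensitivity formula,
\[
s(f,X)=\sum_{\bar f(Y)\ne\bar f(X)}m(X,Y).
\]
The multiplicities $m(X,Y)$ over all neighbors $Y\sim X$ sum to $p$, since every flip lands in some neighboring necklace. Hence $s(f,X)=p$ if and only if every neighbor $Y\sim X$ has $\bar f(Y)\ne\bar f(X)$, because each neighbor has $m(X,Y)\ge1$. Equivalently, $s(f,X)<p$ if and only if $X$ has at least one neighbor of the same color.

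Taking the maximum over $X$, $s(f)<p$ holds if and only if $s(f,X)<p$ for every $X$. By the previous paragraph this is exactly the stated condition that every necklace has a same-colored neighbor. Note that this step does not need the prime-specific multiplicity computations. Alternatively, one can phrase it through the complete odd-prime sensitivity formula: for interior necklaces, $s(f,X)=B_f(X)+N_f(X)$, and this equals $p$ precisely when all neighbors are bichromatic. The direct counting argument is cleaner, so I would use it.

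For the ``in particular'' clause, apply the criterion to $X_0$. By the boundary-layer proposition, $X_0$ has a unique neighbor, namely $X_1$, with $m(X_0,X_1)=p$. The criterion then forces $\bar f(X_0)=\bar f(X_1)$, which matches $s(f,X_0)=p\,\delta_f(X_0,X_1)<p$. The same argument applied to $X_p$, whose only neighbor is $X_{p-1}$, gives $\bar f(X_p)=\bar f(X_{p-1})$.

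I expect no real obstacle here. The only point needing care is the identity $\sum_{Y\sim X}m(X,Y)=p$, which holds because the sets of coordinates landing in distinct necklaces partition $\Z_p$.
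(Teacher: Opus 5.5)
Your proposal is correct and follows essentially the same route as the paper. The multiplicities $m(X,Y)$ partition the $p$ one-coordinate flips, each neighbor has $m(X,Y)\ge1$, and so $s(f,X)=p$ exactly when every neighbor has the opposite value; the boundary clause then follows by applying this to $X_0$ and $X_p$, whose only neighbors are $X_1$ and $X_{p-1}$.
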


\begin{proof}
For every necklace $X=[x]$,
\[
\sum_Y m(X,Y)=p,
\]
because the multiplicities partition the $p$ one-coordinate flips of $x$.  Hence the weighted bichromatic degree equals $p$ exactly when every neighboring necklace is bichromatic.  Taking the maximum over $X$ proves the equivalence.  The final assertion follows by applying the criterion to the constant necklaces.
\end{proof}

The correction from ordinary necklace degree to weighted degree is confined to a very small exceptional family.  Let $D_2(p)$ denote the number of length-$p$ necklaces admitting a nontrivial rotation at Hamming distance two.  Makhynko's prime-length enumeration gives
\[
D_2(p)=\frac{p^2-4p+7}{2}.
\]
Notice that the two constant necklaces are not included in $D_2(p)$: every nontrivial rotation fixes a constant word, so its nontrivial rotational Hamming distance is $0$, not $2$.  Thus subtracting the two constant necklaces in addition to $D_2(p)$ does not double count.

The total number of binary necklaces of prime length $p$ is
\[
N(p)=\frac{2^p+2p-2}{p}.
\]

\begin{corollary}[Generic simplicity at prime length]
Let $p$ be an odd prime.  Apart from the two constant necklaces and the $D_2(p)$ necklaces admitting a nontrivial rotation at Hamming distance two, every necklace $X$ satisfies
\[
s(f,X)=B_f(X)
\]
for every rotation symmetric Boolean function $f$.  Consequently at least
\[
\frac{2^p+2p-2}{p}
-\frac{p^2-4p+7}{2}-2
\]
necklaces have this property.  In particular, the proportion of necklaces for which a multiplicity correction can be required is
\[
O\!\left(\frac{p^3}{2^p}\right),
\]
and therefore tends to zero as $p\to\infty$ through the odd primes.
\end{corollary}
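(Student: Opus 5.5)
The plan is to read the statement off the explicit prime formula (Corollary~\ref{cor:explicit-prime}) together with the enumeration of distance-two necklaces. The proof splits into three steps.

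\textbf{Step 1: which necklaces are excluded.} Fix a necklace $X$ that is not constant and is not among the $D_2(p)$ necklaces admitting a nontrivial rotation at Hamming distance two. We must rule out the weight-one and weight-$(p-1)$ layers, since there the boundary-layer proposition gives doubled multiplicities. A weight-one word $x=e_0$ satisfies $D_k(x)=\{0,k\}$ for every $k\ne0$. So $X_1$ admits a distance-two rotation and is counted in $D_2(p)$. The same holds for $X_{p-1}$ by complementation. Hence $X$ has weight between $2$ and $p-2$.

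\textbf{Step 2: the local identity.} Suppose $\supp(x)$ were a cyclic arithmetic interval $\{a,a+k,\ldots,a+(w-1)k\}$ with $2\le w\le p-2$. Then $D_k(x)=\{a,a+wk\}$ has size two, as computed in the proof of the prime near-period lemma. This contradicts the choice of $X$, so $\supp(x)$ is not a cyclic arithmetic interval. The first case of Corollary~\ref{cor:explicit-prime} then gives $s(f,X)=B_f(X)$. This argument never uses $f$, so the identity holds for every rotation symmetric $f$.

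\textbf{Step 3: counting.} By Step 2 at least $N(p)-D_2(p)-2$ necklaces satisfy the identity. As remarked before the statement, the two constant necklaces are not counted in $D_2(p)$, so this subtraction is valid. Substituting $N(p)=(2^p+2p-2)/p$ and Makhynko's value $D_2(p)=(p^2-4p+7)/2$ gives the displayed lower bound. The exceptional proportion is
\[
\frac{D_2(p)+2}{N(p)}=\frac{p\,(p^2-4p+11)}{2\,(2^p+2p-2)}\le\frac{p^3}{2^p}
\]
for large $p$. This is $O(p^3/2^p)$ and tends to $0$.

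The only step needing care is Step 1, namely confirming that the boundary layers really fall into the $D_2(p)$ count. The explicit computation $D_k(e_0)=\{0,k\}$ settles this, since it shows every nonzero $k$ is a distance-two rotation for $X_1$. Everything else is a direct citation of earlier results plus arithmetic.
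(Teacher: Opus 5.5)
Your proof is correct, but it reaches the local identity by a longer route than the paper. You read $s(f,X)=B_f(X)$ off the first case of Corollary~\ref{cor:explicit-prime}, which is stated only for $2\le\wt(X)\le p-2$. That is why you need two extra steps.
\begin{itemize}
\item Step~1, the computation $D_k(e_0)=\{0,k\}$, places $X_1$ and $X_{p-1}$ inside the $D_2(p)$ count.
\item Step~2 translates ``no distance-two rotation'' into ``support is not a cyclic arithmetic interval''.
\end{itemize}
It also makes the result depend on the whole chain behind that corollary, including the prime near-period lemma and arithmetic-interval uniqueness, none of which is really needed here.

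The paper instead applies the flip-equivalence criterion directly. A nonconstant word of prime length has trivial stabilizer, so alternative~(i) forces $j=i$. The absence of any $k\ne0$ with $|D_k(x)|=2$ rules out alternative~(ii). Hence the $p$ flips of a representative land in $p$ distinct necklaces, every multiplicity is one, and the weighted bichromatic degree equals $B_f(X)$. This argument is uniform in the weight, so the boundary layers never need separate mention.

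Your version does add two things.
\begin{itemize}
\item It checks explicitly that the weight-one and weight-$(p-1)$ necklaces, where the boundary-layer proposition gives multiplicity $2$, really are among the excluded ones.
\item It gives the exact exceptional proportion $p(p^2-4p+11)/\bigl(2(2^p+2p-2)\bigr)$. The paper only uses the asymptotics $N(p)\sim2^p/p$ and $D_2(p)=O(p^2)$.
\end{itemize}
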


\begin{proof}
For a nonconstant necklace $X$ with no nontrivial rotation at Hamming distance two, the flip-equivalence criterion and triviality of the stabilizer imply that the $p$ one-bit flips of a representative lie in $p$ distinct necklaces.  Thus every outgoing multiplicity is one and the weighted bichromatic degree equals the ordinary bichromatic necklace degree:
\[
s(f,X)=B_f(X).
\]
Removing the two constant necklaces and the $D_2(p)$ distance-two necklaces gives the displayed lower bound.  Finally,
\[
N(p)\sim\frac{2^p}{p}
\quad\text{and}\quad
D_2(p)=O(p^2).
\]
Indeed, $D_2(p)$ grows quadratically (not linearly), so dividing its $O(p^2)$ count by $N(p)\asymp 2^p/p$ gives an exceptional proportion of $O(p^3/2^p)$.
\end{proof}

\paragraph{Remark.} The preceding corollary separates the exact and generic aspects of the theory.  Multiplicity corrections are essential for a complete formula, but for prime length they are supported on an asymptotically negligible family of necklaces.

\section{Sensitivity of monomial rotation symmetric functions}

Monomial rotation symmetric Boolean functions form one of the most studied subclasses of rotation symmetric functions.  Let
\[
A\subseteq\mathbb Z_n,\qquad |A|=d,
\]
and let
\[
m_A(x)=\prod_{a\in A}x_a.
\]
Write
\[
\mathcal O(A)=\{A+t:t\in\mathbb Z_n\}
\]
for the set of distinct translates of $A$, and put
\[
t=|\mathcal O(A)|.
\]
The monomial rotation symmetric Boolean function generated by $A$ is
\[
M_A(x)=
\bigoplus_{B\in\mathcal O(A)}
\prod_{j\in B}x_j.
\]
Thus short monomial orbits are automatically counted without duplication.

\begin{proposition}[Incidence formula]
Every coordinate of $\mathbb Z_n$ belongs to exactly
\[
\boxed{\lambda=\frac{td}{n}}
\]
distinct monomials occurring in $M_A$.
\end{proposition}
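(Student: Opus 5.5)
We prove the incidence formula by double counting incidences between coordinates and the monomials of $M_A$, after first checking that the rotation group acts transitively on $\mathcal O(A)$ and on $\mathbb Z_n$.

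Consider the set of incidence pairs
\[
\mathcal I=\{(j,B): B\in\mathcal O(A),\ j\in B\}.
\]
Counting $\mathcal I$ by monomials is immediate. Every translate $A+t$ has exactly $d$ elements, since translation is a bijection of $\mathbb Z_n$. Hence
\[
|\mathcal I|=td .
\]

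Counting $\mathcal I$ by coordinates requires showing that $\lambda_j=\#\{B\in\mathcal O(A):j\in B\}$ does not depend on $j$. For this, use translation by $s$. The map $B\mapsto B+s$ is a bijection of $\mathcal O(A)$ onto itself, because $\mathcal O(A)$ is closed under translation and the map is invertible. It sends monomials containing $j$ exactly onto monomials containing $j+s$. Therefore $\lambda_j=\lambda_{j+s}$ for every $s$, so $\lambda_j$ equals a common value $\lambda$. This gives
\[
|\mathcal I|=n\lambda .
\]

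Equating the two counts yields $n\lambda=td$, that is, $\lambda=td/n$.

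We would add a brief remark linking this to the introduction's phrasing. By orbit--stabilizer, $t=n/h$, where $h$ is the order of the stabilizer of $A$ under translation. Hence $\lambda=d/h$. In particular $h\mid d$, since $A$ is a union of cosets of the stabilizer subgroup.

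There is no real obstacle here. The only point needing care is that $\mathcal O(A)$ consists of \emph{distinct} translates. This is exactly why the bijection $B\mapsto B+s$ is well defined on $\mathcal O(A)$ and why short orbits cause no double counting.
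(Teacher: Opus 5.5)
Your proof is correct and follows the paper's argument: count incidences as $td$ by monomials and as $n\lambda$ by coordinates, where $\lambda$ is constant because translation preserves $\mathcal O(A)$ and acts transitively on $\mathbb Z_n$. You make explicit the bijection $B\mapsto B+s$ that the paper leaves implicit, and your closing remark ($\lambda=d/h$ with $h\mid d$) matches what the paper records immediately after the proposition.
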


\begin{proof}
There are $t$ distinct monomials and each contains $d$ coordinates, so the total number of coordinate--monomial incidences is $td$.  Cyclic rotation acts transitively on the coordinate set and preserves the family $\mathcal O(A)$; hence every coordinate belongs to the same number $\lambda$ of orbit monomials.  Therefore
\[
n\lambda=td,
\]
which proves the formula.
\end{proof}

Let
\[
H_A=\{u\in\mathbb Z_n:A+u=A\}
\]
be the rotational stabilizer of the support $A$, and put $h=|H_A|$.  By orbit--stabilizer,
\[
t=\frac nh.
\]
Moreover, $H_A$ acts freely on $\mathbb Z_n$, so each $H_A$-orbit has size $h$.  Since $A$ is $H_A$-invariant, it is a union of such orbits; hence
\[
h\mid d.
\]
Consequently the incidence number can also be written as
\[
\lambda=\frac dh.
\]

\begin{theorem}[Parity theorem for MRS functions]
Let $M_A$ be generated by a degree-$d$ monomial whose rotational stabilizer has order $h$.  Then
\[
\boxed{
s(M_A,1^n)=
\begin{cases}
n,&d/h\text{ is odd},\\
0,&d/h\text{ is even}.
\end{cases}}
\]
In particular, if $d/h$ is odd, then
\[
\boxed{s(M_A)=n.}
\]
\end{theorem}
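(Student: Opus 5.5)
The plan is to compute $M_A$ and all its one-bit neighbors at the all-one input directly, using the Incidence formula. At $x=1^n$ every monomial evaluates to $1$, so
\[
M_A(1^n)=t \pmod 2 .
\]
Now flip a single coordinate $i$. A monomial $\prod_{j\in B}x_j$ with $B\in\mathcal O(A)$ changes from $1$ to $0$ exactly when $i\in B$. All other monomials stay equal to $1$. By the Incidence formula, precisely $\lambda=d/h$ distinct orbit monomials contain $i$, and this holds for every $i$. Hence
\[
M_A\bigl((1^n)^{(i)}\bigr)=t-\lambda \pmod 2,
\]
and therefore
\[
M_A(1^n)\oplus M_A\bigl((1^n)^{(i)}\bigr)=\lambda \bmod 2 = (d/h)\bmod 2 .
\]
The right-hand side does not depend on $i$. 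So either every coordinate of $1^n$ is sensitive, giving $s(M_A,1^n)=n$ when $d/h$ is odd, or no coordinate is, giving $s(M_A,1^n)=0$ when $d/h$ is even.

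For the final boxed claim, recall that $s(M_A)\le n$ always holds. When $d/h$ is odd, the all-one input attains this bound, so $s(M_A)=n$.

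The only point needing care is that the count uses distinct orbit monomials, so that short orbits are not double counted. This is exactly what the definition of $M_A$ via $\mathcal O(A)$ and the identity $\lambda=td/n=d/h$ (from $t=n/h$) guarantee. I therefore expect no real obstacle. The one step to state explicitly is that $M_A$ is a sum over distinct monomials, so the change in value under the flip is the number of those monomials containing $i$, taken mod $2$.
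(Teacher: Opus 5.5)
Your proposal is correct and is essentially the paper's proof: at $1^n$ every orbit monomial equals $1$, flipping coordinate $j$ kills exactly the $\lambda=d/h$ distinct orbit monomials containing $j$, so $D_jM_A(1^n)\equiv d/h\pmod 2$ for every $j$. Your explicit bookkeeping $M_A(1^n)\equiv t$ and $M_A\bigl((1^n)^{(i)}\bigr)\equiv t-\lambda\pmod 2$ just spells out the same step, and your remark about counting distinct monomials in short orbits is exactly what the incidence formula handles.
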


\begin{proof}
At the all-one input every monomial in $M_A$ has value $1$.  Flipping a fixed coordinate $j$ changes precisely the orbit monomials containing $j$.  By the incidence formula, their number is
\[
\lambda=\frac dh.
\]
Therefore
\[
D_jM_A(1^n)=\frac dh\pmod2
\]
for every coordinate $j$.  If $d/h$ is odd, all $n$ coordinates are sensitive at $1^n$; if $d/h$ is even, none is sensitive there.
\end{proof}

\begin{corollary}[Odd-degree MRS functions]
Every monomial rotation symmetric Boolean function of odd degree $d$ satisfies
\[
\boxed{s(M_A)=n.}
\]
\end{corollary}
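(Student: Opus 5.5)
The plan is to derive the corollary directly from the parity theorem just proved. The only thing to check is that odd $d$ forces $d/h$ to be odd.

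Let $A\subseteq\mathbb Z_n$ with $|A|=d$ odd, and let $h=|H_A|$ be the order of its rotational stabilizer. Before the parity theorem we showed that $A$ is a union of $H_A$-orbits, each of size $h$, so $h\mid d$ and $\lambda=d/h$ is an integer. A divisor of an odd number is odd, so $h$ is odd. Then $d=h\cdot(d/h)$ expresses the odd number $d$ as a product, and both factors must be odd. In particular $d/h$ is odd.

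The parity theorem now gives $s(M_A,1^n)=n$: every coordinate $j$ lies in an odd number $\lambda=d/h$ of orbit monomials, so flipping $x_j$ at the all-one input changes $M_A$. Since trivially $s(M_A)\le n$, we get $s(M_A)=n$.

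No step here is a genuine obstacle; the argument is essentially one line. The only point needing care is that the stabilizer order $h$ might exceed $1$ (short orbits), so one cannot simply say $\lambda=d$. The divisibility $h\mid d$ handles this, because it makes $d/h$ an integer divisor of an odd number and hence odd.
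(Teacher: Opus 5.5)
Your proposal is correct and follows the paper's proof: use $h\mid d$ to conclude that $d/h$ is an odd integer when $d$ is odd, then apply the parity theorem at $1^n$. Your remark that $h>1$ is possible and is absorbed by the divisibility is the same point the paper relies on.
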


\begin{proof}
Since $h\mid d$, oddness of $d$ implies that both $h$ and $d/h$ are odd.  The parity theorem applies.
\end{proof}

Thus all odd degrees have maximum possible sensitivity, including short-orbit monomial functions.  The first nontrivial degree is therefore degree two, which we now determine completely.  Quadratic rotation symmetric Boolean functions have been studied extensively from the viewpoints of Hamming weight, balancedness, affine equivalence, and weight recurrences; see, for example, the work of Cusick and collaborators cited in the Introduction.

For
\[
1\le k\le\left\lfloor\frac n2\right\rfloor,
\]
let $Q_{n,k}$ be the rotation symmetric Boolean function generated by the monomial $x_0x_k$, with each distinct rotated monomial included exactly once.  Thus, except in the short-orbit case $n$ even and $k=n/2$,
\[
Q_{n,k}(x)=
\bigoplus_{i\in\mathbb Z_n}x_i x_{i+k}.
\]

\begin{theorem}[Sensitivity of quadratic MRS functions]
Let $1\le k\le\lfloor n/2\rfloor$.

\begin{enumerate}[label=(\roman*)]
\item If $n$ is even and $k=n/2$, then
\[
\boxed{s(Q_{n,k})=n.}
\]

\item Otherwise, put
\[
g=\gcd(n,2k),\qquad L=\frac ng.
\]
Then
\[
\boxed{
s(Q_{n,k})=
\begin{cases}
n,&L\text{ is even},\\[1mm]
n-g,&L\text{ is odd}.
\end{cases}}
\]
\end{enumerate}
\end{theorem}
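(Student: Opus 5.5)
The plan is to split according to the two cases of the statement. Case (i) follows from the parity theorem. Case (ii) reduces to counting transitions along the cycles of the permutation $u\mapsto u+2k$, using the same cycle decomposition as in the distance-two rotation structure theorem.

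\emph{Case (i).} Suppose $n$ is even and $k=n/2$. Then $A=\{0,n/2\}$ has stabilizer $H_A=\{0,n/2\}$, so $h=2$ and $d/h=1$ is odd. The parity theorem for MRS functions then gives $s(Q_{n,k},1^n)=n$, hence $s(Q_{n,k})=n$.

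\emph{Case (ii), reduction.} Now $2k\not\equiv 0\pmod n$, so the translates $\{i,i+k\}$ for $i\in\Z_n$ are pairwise distinct. Hence $Q_{n,k}=\bigoplus_{i}x_ix_{i+k}$ with each monomial appearing once.
\begin{itemize}
\item A fixed coordinate $j$ lies in exactly the two monomials $x_{j-k}x_j$ and $x_jx_{j+k}$. These are distinct because $j-k\ne j+k$.
\item Flipping $x_j$ therefore changes $Q_{n,k}$ by $x_{j-k}\oplus x_{j+k}$, so $j$ is sensitive at $x$ iff $x_{j-k}\ne x_{j+k}$.
\item Substituting $u=j-k$, which is a bijection of $\Z_n$, gives
\[
s(Q_{n,k},x)=\#\{u\in\Z_n: x_u\ne x_{u+2k}\}=|D_{2k}(x)|.
\]
\end{itemize}

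\emph{Case (ii), optimization.} As in the proof of the distance-two rotation structure theorem, the permutation $u\mapsto u+2k$ splits $\Z_n$ into $g=\gcd(n,2k)$ cycles of length $L=n/g$, with $L\ge2$. $|D_{2k}(x)|$ is the sum over these cycles of the number of transitions in the cyclic binary word read along each cycle. The values of $x$ on different cycles can be chosen independently, so we maximize one cycle at a time.
\begin{itemize}
\item A cyclic binary word of length $L$ has an even number of transitions, at most $L$.
\item If $L$ is even, the alternating word attains $L$ transitions.
\item If $L$ is odd, at most $L-1$ transitions are possible, and $1\,0\,1\,0\cdots1\,0\,0$ attains $L-1$.
\end{itemize}
Summing over the $g$ cycles, the maximum is $gL=n$ when $L$ is even and $g(L-1)=n-g$ when $L$ is odd.

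The only delicate step is the bookkeeping in case (ii). We must make sure each monomial is counted once, which is exactly where $2k\not\equiv0$ is used. We must also make sure $j$ meets two distinct monomials, so the derivative really is $x_{j-k}\oplus x_{j+k}$ and not $0$. Once this is in place, the rest is the elementary transition count above.
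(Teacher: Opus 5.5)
Your proof is correct and matches the paper's argument essentially step for step: in case (ii) you derive $D_jQ_{n,k}=x_{j-k}\oplus x_{j+k}$, reindex to count disagreements along the $g$ cycles of $u\mapsto u+2k$, and maximize the transitions on each cycle independently, exactly as the paper does. The only cosmetic difference is in case (i), where you invoke the parity theorem with $h=2$ and $d/h=1$, while the paper computes $D_jQ_{n,n/2}=x_{j+n/2}$ directly; this is the same incidence argument specialized to this case.
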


\begin{proof}
First suppose that $n$ is even and $k=n/2$.  The orbit of the generating monomial has size $n/2$, and
\[
Q_{n,n/2}(x)
=
\bigoplus_{i=0}^{n/2-1}x_i x_{i+n/2}.
\]
Each coordinate occurs in exactly one monomial.  Hence
\[
D_jQ_{n,n/2}(x)
=
x_{j+n/2},
\]
where indices are taken modulo $n$.  At $x=1^n$ every coordinate is sensitive, so $s(Q_{n,n/2})=n$.

Now assume that we are not in the short-orbit case.  Each coordinate $x_j$ occurs in exactly two distinct rotated monomials, namely
\[
x_{j-k}x_j
\qquad\text{and}\qquad
x_jx_{j+k}.
\]
Therefore the Boolean derivative in coordinate $j$ is
\[
D_jQ_{n,k}(x)
=
Q_{n,k}(x)\oplus Q_{n,k}(x^{(j)})
=
x_{j-k}\oplus x_{j+k}.
\]
Consequently
\[
s(Q_{n,k},x)
=
\#\{j\in\mathbb Z_n:x_{j-k}\ne x_{j+k}\}
=
\#\{j\in\mathbb Z_n:x_j\ne x_{j+2k}\}.
\]

Consider the permutation
\[
T:j\longmapsto j+2k\pmod n.
\]
It has
\[
g=\gcd(n,2k)
\]
cycles, each of length
\[
L=n/g.
\]
On each cycle, the contribution to the last display is exactly the number of transitions in a cyclic binary sequence of length $L$.  If $L$ is even, an alternating assignment attains $L$ transitions.  If $L$ is odd, a cyclic binary sequence has an even number of transitions and therefore at most $L-1$; this value is attained by an alternating assignment with one repeated bit at the closing edge.  The cycles are disjoint and may be assigned independently.  Thus
\[
s(Q_{n,k})
=
\begin{cases}
gL=n,&L\text{ even},\\
g(L-1)=n-g,&L\text{ odd},
\end{cases}
\]
as claimed.
\end{proof}

\begin{corollary}[Odd prime length]
Let $p$ be an odd prime.  For every
\[
1\le k\le\frac{p-1}{2},
\]
the quadratic monomial rotation symmetric Boolean function $Q_{p,k}$ satisfies
\[
\boxed{s(Q_{p,k})=p-1.}
\]
Thus all quadratic MRS functions of odd prime length have the same sensitivity, independent of the spacing of the generating monomial.
\end{corollary}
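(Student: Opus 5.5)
The plan is to specialize the quadratic sensitivity theorem to $n=p$. Since $p$ is odd, $n$ is not even, so the short-orbit case (i) cannot occur for any $1\le k\le (p-1)/2$. We are therefore always in case (ii), with
\[
g=\gcd(p,2k),\qquad L=p/g.
\]

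The key step is to compute $g$. Because $p$ is an odd prime, it does not divide $2$. It also does not divide $k$, since $1\le k\le (p-1)/2<p$. Hence $p\nmid 2k$, and primality of $p$ gives $\gcd(p,2k)=1$. Consequently $g=1$ and $L=p$, which is odd.

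The odd branch of case (ii) then yields
\[
s(Q_{p,k})=n-g=p-1,
\]
which does not depend on $k$. This gives the boxed formula and the final assertion about spacing-independence.

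No step is a real obstacle. The only thing to watch is that the short-orbit exclusion genuinely holds for every admissible $k$, and it does because $p$ is odd. As a sanity check, the underlying mechanism is visible directly: $j\mapsto j+2k$ is a single $p$-cycle on $\mathbb Z_p$. A cyclic binary word of odd length $p$ has at most $p-1$ transitions, and an alternating assignment with one repeated bit attains exactly $p-1$.
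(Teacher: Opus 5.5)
Your proposal is correct and follows the paper's proof: both specialize the quadratic sensitivity theorem to $n=p$, observe $\gcd(p,2k)=1$, and read off $g=1$, $L=p$ odd, giving $s(Q_{p,k})=p-1$. You also state explicitly that the short-orbit case cannot occur because $p$ is odd, a point the paper leaves implicit.
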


\begin{proof}
Since $p$ is an odd prime and $1\le k\le(p-1)/2$, we have
$\gcd(p,k)=1$ and $\gcd(p,2)=1$.  Therefore
\[
\gcd(p,2k)=1.
\]
In the notation of the preceding theorem, $g=\gcd(n,2k)$.  Here $n=p$, so $g=\gcd(p,2k)=1$.  Also $L=p$ is odd.  The theorem therefore gives $s(Q_{p,k})=p-1$.
\end{proof}

\subsection{The consecutive quartic family}

The parity theorem shows that odd-degree MRS functions always have maximum sensitivity, while the quadratic case exhibits genuinely arithmetic behavior.  The general degree-four problem is substantially broader: after rotation, a quartic generating monomial may have support
\[
A=\{0,a,b,c\}\subseteq\mathbb Z_n,
\]
and we do not claim a classification for arbitrary choices of $a,b,c$.  In this subsection we completely determine the sensitivity only for the fundamental consecutive family $A=\{0,1,2,3\}$.  For $n\ge4$, let
\[
R_n(x)=
\bigoplus_{i\in\mathbb Z_n}
x_ix_{i+1}x_{i+2}x_{i+3},
\]
with distinct rotated monomials included once.  For $n=4$ this is the single monomial $x_0x_1x_2x_3$.

\begin{theorem}[Consecutive quartic sensitivity]
For $n\ge5$, write
\[
n=6q+r,\qquad 0\le r\le5.
\]
Then
\[
\boxed{
s(R_n)=n-\min\{r,6-r\}.
}
\]
Equivalently,
\[
\boxed{
s(R_n)=
\begin{cases}
6q,&r=0,1,2,3,\\
6q+2,&r=4,\\
6q+4,&r=5.
\end{cases}}
\]
For the short-orbit case $n=4$,
\[
s(R_4)=4.
\]
\end{theorem}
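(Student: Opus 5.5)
The plan is to compute the local sensitivity of $R_n$ through Boolean derivatives and then turn the maximization into a finite-state problem on cyclic words. For $n\ge7$ the four windows $\{j-3,\dots,j\}$, $\{j-2,\dots,j+1\}$, $\{j-1,\dots,j+2\}$ and $\{j,\dots,j+3\}$ are distinct orbit monomials containing $j$. Also $\lambda=4$ by the incidence formula, which already gives $s(R_n,1^n)=0$ by the parity theorem. Hence
\[
D_jR_n(x)=x_{j-3}x_{j-2}x_{j-1}\oplus x_{j-2}x_{j-1}x_{j+1}\oplus x_{j-1}x_{j+1}x_{j+2}\oplus x_{j+1}x_{j+2}x_{j+3},
\]
and $s(R_n,x)=\sum_{j\in\Z_n}\phi(x_{j-3},\dots,x_{j+3})$ for a fixed local rule $\phi:\{0,1\}^7\to\{0,1\}$. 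The cases $n=4$ (a single monomial, sensitive in all four coordinates at $1^4$) and $n=5,6$, where windows wrap and coincide, are finite and will be checked directly. For $n=6$ the value should be $6$, consistent with the formula.

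For $n\ge7$, $s(R_n,x)$ is the weight of a closed walk of length $n$ in the de Bruijn graph on $\{0,1\}^6$. Each edge $u\to v$ corresponds to a length-$7$ window and carries weight $\phi$ of that window. So $s(R_n)$ is the maximum weight of a closed walk of length exactly $n$.

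For the lower bound, I will first search the period-six words for one with $\phi\equiv1$ along its cycle. A natural candidate is a word with a single short block of zeros, for instance a rotation of $110100$ or $111010$, to be verified from the derivative formula. Repeating it $q$ times gives $s(R_{6q})=6q$, and, after deleting $r\le3$ positions, gives $6q$ for $r=0,\dots,3$. For $r=4,5$ I will instead splice in a short word that increases the length by $4$ or $5$ while losing only $2$ or $1$ sensitive positions, writing $n=6(q+1)-(6-r)$. These splices come from explicit short detour cycles in the graph, which I will find from the transfer table.

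The upper bound is the main obstacle. I plan a potential-function (defect) certificate. I will look for $\psi:\{0,1\}^6\to\mathbb Z$ and a defect $c(e)\ge0$ on edges satisfying
\[
\phi(e)=1-c(e)+\psi(u)-\psi(v),
\]
so that the weight of a closed walk of length $n$ equals $n-\sum c(e)$. Then $c(e)=0$ exactly on the edges of the period-six cycle. It remains to show that a closed walk of length $n\not\equiv0\pmod 6$ has total defect at least $\min\{r,6-r\}$. For this I will attach to each state a phase in $\Z_6$ that advances by $1$ along defect-free edges. Each defective edge changes the phase by an amount $\delta(e)$ with $|\delta(e)|\le c(e)$ as an element of $\Z_6$ measured by cyclic distance. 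Closing up the walk forces the phase shifts to sum to $-n\equiv -r$, and therefore $\sum c(e)\ge\min\{r,6-r\}$. Producing $\psi$, $c$ and the phase assignment is a finite linear-programming and verification task over $64$ states. This is presumably the ``period-six defect certificate'' of Appendix~A, and the heavy work will be checking that the inequality $|\delta(e)|\le c(e)$ holds on all $128$ edges.
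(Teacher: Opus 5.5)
\paragraph{Upper bound.} Your setup matches the paper's: the derivative rule, closed walks of length $n$ in the six-bit state graph, and a mod-$6$ phase count. But the upper bound, which is the whole difficulty, is deferred to a certificate you neither exhibit nor show must exist, and it is not the one in Appendix~A. You require a phase $\pi$ on all $64$ states and a potential $\psi$ with $d_6(\pi(v)-\pi(u)-1,0)\le 1-\phi(e)+\psi(u)-\psi(v)$ on every edge. That is stronger than the theorem, which only needs every closed walk of length $n$ to contain at least $d_6(n,0)$ weight-zero edges. A single phase per state can be impossible even when all such closed-walk bounds hold, for instance if some off-cycle state can be entered cheaply at two phases differing by $3$. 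Choosing $\pi$ is also a combinatorial search over $\Z_6^{64}$, not a linear program. The paper never assigns phases off the cycle. It cuts a closed walk meeting (Q4) into excursions and bounds each one at the path level, $z(P)\ge d_6(\delta(P),0)$, using the minimum-cost computation in the augmented graph $\{0,1\}^6\times\Z_6$ recorded in (A1).

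\paragraph{Walks avoiding the cycle, and the lower bound.} Your plan also misses the closed walks that avoid the six-cycle. With integer $\psi$, ``$c(e)=0$ exactly on the six-cycle edges'' forces $c\ge1$ on every other edge, hence weight $\le0$ on every walk avoiding the cycle. But the period-$5$ word $01011$ runs through $010110,101101,011010,110101,101011$, none of which lies in (Q4), and it has two sensitive coordinates. Such walks need their own estimate. The paper uses the separate potential (A2) with $5w-3\le H(u)-H(v)$, giving $s(R_n,x)\le 3n/5$. This beats $n-\min\{r,6-r\}$ only for $n\ge8$, so $n=5,6,7$ are enumerated directly.

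Your lower-bound candidates also fail. Both $(110100)^q$ and $(111010)^q$ are insensitive at coordinate $0$. In fact $(001111)^q$ is the only fully sensitive family, because (Q4) is the unique recurrent weight-one component. For $r\le3$ you must insert rather than delete, e.g.\ $0^r(001111)^q$. For $r=4,5$ you shorten one or two blocks to $01111$, as in (Q9).
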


\begin{proof}
For $n\ge5$, the Boolean derivative at coordinate $j$ is
\[
\begin{aligned}
D_jR_n(x)
={}&x_{j+1}x_{j+2}x_{j+3}
\oplus x_{j-1}x_{j+1}x_{j+2}\\
&\oplus x_{j-2}x_{j-1}x_{j+1}
\oplus x_{j-3}x_{j-2}x_{j-1}.
\end{aligned}
\tag{Q1}
\]
Thus sensitivity can be encoded by a finite weighted state graph.  A state is a six-bit word
\[
u=(a,b,c,d,e,f)\in\{0,1\}^6.
\]
Appending $g\in\{0,1\}$ gives the transition
\[
(a,b,c,d,e,f)\longrightarrow(b,c,d,e,f,g)
\]
with weight
\[
w(u,g)=efg\oplus cef\oplus bce\oplus abc.
\tag{Q2}
\]
For a cyclic input $x$, the corresponding closed walk has length $n$, and its total weight is exactly $s(R_n,x)$.  Hence, if $z(x)$ denotes the number of weight-zero transitions,
\[
s(R_n,x)=n-z(x).
\tag{Q3}
\]

The weight-one subgraph has a unique recurrent component, namely the directed six-cycle
\[
001111\to011110\to111100\to111001
\to110011\to100111\to001111.
\tag{Q4}
\]
Assign phases $0,1,\ldots,5$ to these six states in the displayed order.

We distinguish two cases.  First suppose that the closed state walk visits the six-cycle.  Decompose the walk at successive visits to the cycle.  The finite-state excursion certificate in Appendix~\ref{app:quartic-certificate} shows that every excursion $P$ which starts and ends on the cycle and has no intermediate visit to it satisfies
\[
z(P)\ge d_6(\delta(P),0),
\tag{Q5}
\]
where
\[
\delta(P)=\phi(\operatorname{end}P)-\phi(\operatorname{start}P)-|P|\pmod6
\]
and
\[
d_6(a,0)=\min\{a,6-a\}.
\]
Since the complete walk closes, the sum of the excursion phase defects is congruent to $-n$ modulo $6$.  The triangle inequality for $d_6$ therefore gives
\[
z(x)\ge d_6(n,0)=\min\{r,6-r\}.
\]

Now suppose that the closed walk avoids all six states in (Q4).  The avoidance certificate in Appendix~\ref{app:quartic-certificate} assigns an integer potential $H$ to the remaining $58$ states such that every allowed transition $u\to v$ of weight $w\in\{0,1\}$ satisfies
\[
5w-3\le H(u)-H(v).
\tag{Q6}
\]
Summing around a closed walk makes the potential terms telescope, and hence
\[
5s(R_n,x)-3n\le0.
\]
Thus
\[
s(R_n,x)\le\frac{3n}{5}.
\tag{Q7}
\]
For $n\ge8$,
\[
\frac{3n}{5}\le n-3\le n-\min\{r,6-r\}.
\]
For completeness, direct enumeration of all $2^n$ binary inputs using
(Q1) gives the following sensitivity distributions, where an entry
$s:c$ means that exactly $c$ inputs have sensitivity $s$:
\[
\begin{array}{c|l|c}
n & \text{sensitivity distribution }(s:c)&\max_x s(R_n,x)\\ \hline
5 & 0:17,\ 2:10,\ 4:5 & 4\\
6 & 0:34,\ 1:12,\ 2:12,\ 6:6 & 6\\
7 & 0:51,\ 1:28,\ 2:21,\ 4:21,\ 6:7 & 6
\end{array}
\]
Each row sums to $2^n$, and the maxima agree with
$n-\min\{r,6-r\}$.  Consequently, in both cases,
\[
s(R_n,x)\le n-\min\{r,6-r\}.
\tag{Q8}
\]

It remains to show sharpness.  For the six residue classes modulo $6$, the following cyclic words attain equality:
\[
\begin{array}{c|c}
r&\text{word}\\ \hline
0&(001111)^q\\
1&0(001111)^q\\
2&00(001111)^q\\
3&000(001111)^q\\
4&(001111)^{q-1}(01111)^2\\
5&(001111)^q01111.
\end{array}
\tag{Q9}
\]
Direct substitution into (Q1) shows that these words have respectively
\[
0,1,2,3,2,1
\]
nonsensitive coordinates.  Thus they attain the upper bound in (Q8).  Finally, when $n=4$, $R_4=x_0x_1x_2x_3$, and every coordinate is sensitive at $1^4$.
\end{proof}

\paragraph{Remark.} The theorem illustrates behavior that cannot be detected from the all-one input alone.  Since the degree is even, the parity theorem gives $s(R_n,1^n)=0$ for the full-orbit cases, yet $R_n$ has maximum possible sensitivity whenever $6\mid n$.

\section{Exact sensitivity of cyclic run-threshold functions}

We next apply the sensitivity viewpoint to a standard cyclically invariant construction.  Chaubal and G\'al studied the function that detects a cyclic block of consecutive ones, using a threshold of order $\sqrt n$, and obtained asymptotically tight sensitivity estimates for that construction~\cite{ChaubalGal}.  Here we determine the sensitivity exactly for arbitrary $n$ and arbitrary threshold.

For $2\le r\le n$, define
\[
F_{n,r}(x)=1
\quad\Longleftrightarrow\quad
x\text{ contains a cyclic run of at least }r\text{ consecutive ones}.
\]
This function is rotation symmetric.  Write $s_b(F_{n,r})$ for the maximum local sensitivity over inputs on which $F_{n,r}=b$.

\begin{lemma}[Cyclic block optimization]
Let $m,q\ge1$.  Among all cyclic sequences of nonnegative integers
\[
a_1,\ldots,a_m
\]
satisfying
\[
a_i+a_{i+1}\ge q\qquad(i\pmod m),
\]
the minimum of $\sum_i a_i$ is
\[
\left\lceil\frac{mq}{2}\right\rceil.
\]
\end{lemma}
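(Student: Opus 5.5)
The plan is to match a double-counting lower bound with an explicit construction of equal value.

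\emph{Lower bound.} I would sum the $m$ cyclic constraints $a_i+a_{i+1}\ge q$ over $i\in\mathbb Z_m$. Each $a_j$ appears in exactly two constraints, namely those indexed by $i=j-1$ and $i=j$. So the sum gives
\[
2\sum_i a_i\ \ge\ mq .
\]
The degenerate cases need a quick check. When $m=1$, the single constraint reads $2a_1\ge q$. When $m=2$, the constraint $a_1+a_2\ge q$ is listed twice. In both cases the counting remains valid. Since $\sum_i a_i$ is an integer, we get $\sum_i a_i\ge\lceil mq/2\rceil$.

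\emph{Upper bound by construction.} I would split according to the parities of $m$ and $q$.
\begin{itemize}
\item If $q$ is even, take every $a_i=q/2$. Each constraint holds with equality, and the sum is $mq/2$.
\item If $q$ is odd and $m$ is even, alternate $\lceil q/2\rceil,\lfloor q/2\rfloor$ around the cycle. Every adjacent pair sums to exactly $q$, and the total is $(m/2)q=mq/2$.
\item If both $q$ and $m$ are odd, write $m=2t+1$. Take the sequence $\lceil q/2\rceil,\lfloor q/2\rfloor,\ldots,\lceil q/2\rceil,\lfloor q/2\rfloor,\lceil q/2\rceil$, which has $t+1$ copies of $\lceil q/2\rceil$ and $t$ copies of $\lfloor q/2\rfloor$. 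Every interior pair sums to $q$. The closing pair $(a_m,a_1)$ is two copies of $\lceil q/2\rceil$, so it sums to $q+1\ge q$. For $m=1$ this reads $2a_1=q+1\ge q$. The total is
\[
(t+1)\tfrac{q+1}{2}+t\tfrac{q-1}{2}=\tfrac{mq+1}{2}=\left\lceil \tfrac{mq}{2}\right\rceil .
\]
\end{itemize}
All entries are nonnegative integers because $q\ge1$.

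There is no real obstacle here. The only care needed is with the wraparound constraint in the odd–odd case, and with confirming that the double count is still correct for $m\le2$, where the constraints coincide or involve the same variable twice.
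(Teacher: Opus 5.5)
Your proof is correct and follows the paper's argument: you sum the $m$ cyclic constraints for the lower bound, then attain it by a parity case split (constant $q/2$; alternating $\lceil q/2\rceil,\lfloor q/2\rfloor$; and, when $m$ and $q$ are both odd, $(m+1)/2$ copies of $\lceil q/2\rceil$ placed so that the only adjacent pair of ceilings is the wraparound pair). Your explicit treatment of the degenerate cases $m\le 2$, and your explicit arrangement in the odd--odd case, are slightly more careful than the paper, which only asserts that a suitable cyclic arrangement exists.
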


\begin{proof}
Summing the $m$ inequalities gives
\[
2\sum_{i=1}^m a_i\ge mq,
\]
hence the displayed lower bound.  It remains to attain it.  If $m$ is even, alternate $\lfloor q/2\rfloor$ and $\lceil q/2\rceil$.  If $m$ is odd and $q$ is even, take every $a_i=q/2$.  Finally, if both $m$ and $q$ are odd, write $q=2h+1$ and take $(m-1)/2$ entries equal to $h$ and $(m+1)/2$ entries equal to $h+1$, arranged cyclically so that no two entries equal to $h$ are adjacent.  Every adjacent sum is then at least $q$, and the total is $\lceil mq/2\rceil$.
\end{proof}

\begin{theorem}[Exact run-threshold sensitivity]
For $2\le r\le n$,
\[
\boxed{
s_1(F_{n,r})=r,\qquad
s_0(F_{n,r})=\left\lfloor\frac{2n}{r+1}\right\rfloor.
}
\]
Consequently,
\[
\boxed{
s(F_{n,r})
=
\max\left\{
r,\left\lfloor\frac{2n}{r+1}\right\rfloor
\right\}.
}
\]
\end{theorem}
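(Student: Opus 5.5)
The plan is to treat the two values $b=1$ and $b=0$ separately, using only the fact that a single flip changes one run structure locally. The final formula for $s(F_{n,r})$ then follows by taking the maximum of $s_1$ and $s_0$.

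\emph{The $1$-inputs.} Let $F_{n,r}(x)=1$. Flipping a $0$ to a $1$ cannot destroy a run, so only $1$-coordinates can be sensitive. If $x$ has two disjoint cyclic runs of length $\ge r$, no single flip destroys both, and $s(F_{n,r},x)=0$.

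Suppose instead that $x\neq 1^n$ has a unique maximal run of length $\ell\ge r$ (necessarily $\ell\le n-1$). Flipping the coordinate at offset $t$ ($0\le t\le \ell-1$) inside this run splits it into pieces of lengths $t$ and $\ell-1-t$. The flip is sensitive iff both pieces are $<r$, i.e.\ iff $\ell-r\le t\le r-1$. This gives $\max\{0,2r-\ell\}$ sensitive coordinates, which is at most $r$, with equality at $\ell=r$. The word $1^r0^{n-r}$ realizes this when $r<n$.

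For $x=1^n$, a flip leaves a run of length $n-1$. So either all $n$ coordinates are sensitive (when $r=n$) or none are. Hence $s_1(F_{n,r})=r$ in all cases.

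\emph{The $0$-inputs: reduction.} Let $F_{n,r}(x)=0$. The word $0^n$ has sensitivity $0$, since a single $1$ is a run of length $1<r$. Otherwise, let $x$ have $m\ge1$ zeros and, going around the cycle, gaps $a_1,\dots,a_m\ge0$ of ones between consecutive zeros. Then $\sum a_i=n-m$ and every $a_i\le r-1$. Only $0$-coordinates can be sensitive, and the zero lying between gaps $a_i$ and $a_{i+1}$ is sensitive iff $a_i+a_{i+1}+1\ge r$.

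\emph{Upper bound for $s_0$.} Let $S$ be the set of sensitive zeros and $s=|S|$. Each gap appears in at most two of the sums $a_i+a_{i+1}$ with $i\in S$, so
\[
2n=2m+2\sum_i a_i\ \ge\ 2s+\sum_{i\in S}(a_i+a_{i+1})\ \ge\ 2s+s(r-1)=s(r+1).
\]
Therefore $s\le\lfloor 2n/(r+1)\rfloor$.

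\emph{Lower bound for $s_0$.} Set $m=\lfloor 2n/(r+1)\rfloor$; we may assume $m\ge1$. I would try to make every zero sensitive. The cyclic block optimization lemma with $q=r-1$ gives gaps with $a_i+a_{i+1}\ge r-1$ and total $\lceil m(r-1)/2\rceil$. The lemma's construction has every entry at most $\lceil (r-1)/2\rceil\le r-1$. Since $n$ is an integer, $n\ge m(r+1)/2$ is equivalent to $n-m\ge\lceil m(r-1)/2\rceil$, so there are enough ones available.

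\emph{Main obstacle.} The remaining step is absorbing the excess $E=n-m-\lceil m(r-1)/2\rceil$ ones without creating a run of length $\ge r$. Raising gaps up to the cap $r-1$ only increases adjacent sums, so it preserves sensitivity. This succeeds whenever $n-m\le m(r-1)$, i.e.\ $n\le mr$.

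If the capacity is insufficient, which happens only for small $n$ relative to $r$ (for instance $m=1$), I would argue differently. Adding extra zeros can destroy sensitivity, so instead I would use the slack in the upper bound and write down explicit words for the few cases with $m\le 2$. For example, when $m=1$ one can take $x=1^{r-1}0^{n-r+1}$, which has $n-r+1\ge1$ zeros, and flipping the zero adjacent to the run (on the appropriate side) creates a run of length $r$. I would then check that $m\ge 3$ forces $n\le mr$ in general, which would close the remaining cases. Verifying this last inequality (or finding a cleaner padding argument) is the step I expect to need the most care.
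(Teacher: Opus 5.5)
Your plan follows the paper's proof almost step for step: the same run analysis for $s_1$ (you are more explicit, computing the count $\max\{0,2r-\ell\}$), the same double-counting bound $s(r+1)\le 2n$ for $s_0$, and the same lower-bound construction, namely the cyclic block optimization lemma followed by enlarging one-blocks up to the cap $r-1$. The gap is the step you flagged yourself. You never prove the capacity inequality $n-m\le m(r-1)$. Your fallback of explicit words for $m\le 2$ is carried out only for $m=1$. So the case $m=2$ is left open, and strictly speaking so are all $m\ge 3$, since you only announce that you would check $n\le mr$ there.

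The fix is a two-line computation that works uniformly for every $m\ge 2$. Since $m=\lfloor 2n/(r+1)\rfloor$, write $2n=m(r+1)+t$ with $0\le t\le r$. Then
\[
2\bigl(m(r-1)-(n-m)\bigr)=2mr-2n=m(r-1)-t\ge 2(r-1)-r=r-2\ge 0,
\]
so padding always succeeds. This is exactly how the paper closes the argument.

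When $m=1$, the inequality $2n<2(r+1)$ forces $r=n$, and $n\le mr$ then holds with equality. Thus the ``insufficient capacity'' regime you were preparing for never occurs, and your remark that it arises for $m=1$ is mistaken. The real reason to treat $m=1$ separately is different. There the single zero has the same block on both sides, so the flip creates a run of length $a_1+1=n$ rather than $a_i+1+a_{i+1}$. Your word $1^{r-1}0^{n-r+1}$ is then just the paper's word $1^{n-1}0$.

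With the displayed inequality inserted, your argument is complete.
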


\begin{proof}
First consider a $1$-input.  A sensitive coordinate must be a $1$ whose deletion destroys every cyclic run of length at least $r$.  Hence all sensitive coordinates lie in a single maximal run of ones.  If that run has length $L\ge r$, deleting a position is sensitive only when the remaining portions on both sides have lengths at most $r-1$.  The number of such positions is at most $r$.  Equality is attained by a word having exactly one run of $r$ ones and all other coordinates zero.  Thus $s_1(F_{n,r})=r$.

Now let $x$ be a $0$-input and suppose that it has $m$ sensitive zero coordinates.  For each such zero, let $a_i$ and $b_i$ be the lengths of the adjacent runs of ones on its two sides.  Flipping that zero creates a run of length $a_i+1+b_i$, so sensitivity implies
\[
a_i+b_i\ge r-1.
\]
Summing over the $m$ sensitive zeros, and observing that every $1$-coordinate can be counted at most twice, gives
\[
m(r-1)\le2\,\wt(x).
\]
Since $x$ contains at least these $m$ zero coordinates,
\[
\wt(x)\le n-m.
\]
Therefore
\[
m(r+1)\le2n,
\]
and hence
\[
s_0(F_{n,r})\le\left\lfloor\frac{2n}{r+1}\right\rfloor.
\]

For the reverse inequality, put
\[
m=\left\lfloor\frac{2n}{r+1}\right\rfloor,\qquad q=r-1.
\]
If $m=1$, then necessarily $r=n$; the word $1^{n-1}0$ is a $0$-input whose unique zero is sensitive, so the desired lower bound holds.

Assume now that $m\ge2$.  By the cyclic block optimization lemma, choose nonnegative integers $a_1,\ldots,a_m$ with
\[
a_i+a_{i+1}\ge r-1
\]
and
\[
\sum_i a_i=\left\lceil\frac{m(r-1)}2\right\rceil.
\]
The initial cyclic word
\[
1^{a_1}0\,1^{a_2}0\cdots1^{a_m}0
\]
has length
\[
m+\left\lceil\frac{m(r-1)}2\right\rceil
=
\left\lceil\frac{m(r+1)}2\right\rceil
\le n.
\]
The ceiling brackets in both displayed expressions are essential when $m(r-1)$ is odd.  Since $m=\lfloor 2n/(r+1)\rfloor$, we have $m(r+1)\le 2n$; hence $\lceil m(r+1)/2\rceil\le n$ because $n$ is an integer.  We enlarge the one-blocks, rather than the zero gaps, until their total length is exactly $n-m$.  This is possible while keeping every $a_i\le r-1$.  Indeed, the preceding inequality gives
\[
\left\lceil\frac{m(r-1)}2\right\rceil\le n-m.
\]
For the upper capacity, write
\[
2n=m(r+1)+t,\qquad 0\le t\le r.
\]
Since $m\ge2$,
\[
2\bigl(m(r-1)-(n-m)\bigr)
=m(r-1)-t
\ge 2(r-1)-r
=r-2\ge0.
\]
Hence
\[
n-m\le m(r-1).
\]
Starting from the minimizing sequence, increase entries one unit at a time, never beyond $r-1$, until their sum is $n-m$.  The inequalities $a_i+a_{i+1}\ge r-1$ are preserved.

The resulting length-$n$ cyclic word
\[
1^{a_1}0\,1^{a_2}0\cdots1^{a_m}0
\]
has no run of $r$ ones, because every $a_i\le r-1$, and hence is a $0$-input.  Flipping any displayed zero joins its two adjacent one-blocks into a run of length
\[
a_i+1+a_{i+1}\ge r,
\]
so all $m$ displayed zeros are sensitive.  Therefore
\[
s_0(F_{n,r})\ge m.
\]
The upper and lower bounds agree.
\end{proof}

\begin{corollary}
For $(n,r)=(11,4)$,
\[
s(F_{11,4})=4.
\]
Thus the cyclic run-threshold construction gives an explicit $11$-variable rotation symmetric Boolean function of sensitivity $4$.
\end{corollary}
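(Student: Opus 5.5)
This is a direct specialization of the Exact run-threshold sensitivity theorem to $(n,r)=(11,4)$, so the plan is to substitute and compare the two quantities in the maximum.

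\textbf{Step 1: the two terms.} First compute $s_1(F_{11,4})=r=4$. Then compute
\[
s_0(F_{11,4})=\left\lfloor\frac{2\cdot 11}{4+1}\right\rfloor=\left\lfloor\frac{22}{5}\right\rfloor=4 .
\]
Hence $s(F_{11,4})=\max\{4,4\}=4$. The hypothesis $2\le r\le n$ clearly holds, and rotation symmetry of $F_{n,r}$ was already noted when the function was defined, so the corollary's final sentence follows.

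\textbf{Step 2: explicit witnesses.} For concreteness, I will also exhibit inputs attaining the value 4 on each side.

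\begin{itemize}[label={}]
\item \emph{A $1$-input.} Take $1^40^7$. It contains a run of length $r=4$, so $F_{11,4}=1$. Each of its four ones is sensitive, since deleting any one of them breaks the only run of length $4$.
\item \emph{A $0$-input.} Follow the construction in the proof with $m=4$ and $q=3$. The optimization lemma gives a minimizing sequence of total $\lceil 12/2\rceil=6$, for example $(1,2,1,2)$. This must be enlarged to total $n-m=7$ while keeping every entry at most $3$, for example $(1,3,1,2)$. The resulting word is $1\,0\,111\,0\,1\,0\,11\,0$, of length $11$.
\end{itemize}

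\textbf{Step 3: check the $0$-input.} The longest run of ones in this word is $3<4$, so it is a $0$-input. Flipping each displayed zero joins two neighboring blocks. The adjacent sums are $1+3$, $3+1$, $1+2$, and $2+1$, giving runs of length $5,5,4,4$, all at least $4$. So all four zeros are sensitive. These calculations are routine, and there is no real obstacle.
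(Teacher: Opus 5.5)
Your proposal is correct and matches the paper, which obtains this corollary directly by substituting $(n,r)=(11,4)$ into the exact run-threshold theorem, giving $s_1=4$ and $s_0=\lfloor 22/5\rfloor=4$. Your explicit witnesses $1^40^7$ and $1\,0\,111\,0\,1\,0\,11\,0$ are valid and add a concrete check of the lower bound, while the upper bound still rests on the theorem, as it should.
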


\paragraph{Remark.} The construction itself and its $\Theta(\sqrt n)$ sensitivity behavior are known from the work of Chaubal and G\'al~\cite{ChaubalGal}.  The contribution here is the exact two-parameter formula above.  In particular, choosing $r$ near the balance point $r(r+1)\approx2n$ recovers the familiar $O(\sqrt n)$ scale, but the asymptotic construction is not claimed as new.

\section{Computational verification}
The consecutive quartic formula and its residue-class constructions were independently checked for a broad range of lengths using both direct Boolean-cube evaluation in small dimensions and the finite-state formulation.

The incidence and parity formulas for monomial rotation symmetric functions were also checked directly for all monomial supports in small dimensions, including short orbits; no discrepancy was found.

As an additional check, the quadratic MRS sensitivity formula was compared with direct exhaustive evaluation on the Boolean cube for a range of small values of $n$ and all admissible $k$, including the short-orbit cases; no discrepancy was found.

As a check on the general multiplicity formula, an exhaustive independent computation was performed for all binary necklaces of lengths $4\le n\le12$ and all flip positions. In total, 8,488 necklace/position cases were checked. The directly counted multiplicity agreed with the formula in every case, and no divisibility failure occurred. The case count is independently specified by the binary-necklace numbers $(6,8,14,20,36,60,108,188,352)$ for lengths $4,\ldots,12$, respectively, since $\sum_{n=4}^{12}n\Ncal(n)=8{,}488$. These computations are supporting verification only; the results above are proved independently.

For the prime near-period lemma, exhaustive checks for the odd primes $p=5,7,11,13$ found no counterexample to $q_i(x)\le1$ in the range $2\le\wt(x)\le p-2$.

As an additional end-to-end check of the sensitivity formulas, for $p=3$ and $p=5$ we enumerated every rotation symmetric Boolean function (respectively $16$ and $256$ functions).  For each function we computed sensitivity directly on the full Boolean cube and independently from the weighted necklace formula.  The two values agreed in every case.  This check includes the constant, weight-one, interior, weight-$(p-1)$, and weight-$p$ layers.

The run-threshold formula was independently checked by exhaustive evaluation of every Boolean input for all $2\le n\le16$ and every $2\le r\le n$; both $s_0$ and $s_1$ agreed with the theorem in every case.

\medskip
\noindent\textbf{Reproducibility of the finite checks.}
The necklace multiplicity check can be repeated by choosing, for each $n$ in $4,\ldots,12$, the lexicographically least representative $x$ of every rotation orbit, and, for each coordinate $i$, setting $y=x^{(i)}$.  Count directly the indices $j$ for which $[x^{(j)}]=[y]$ and compare with the right-hand side of the multiplicity identity in Section~4, evaluating the stabilizers and near-period counts from their definitions.  This specifies the $8{,}488$ test pairs without requiring a particular implementation or a prescribed order of orbit representatives.  Likewise, the small-input sensitivity tests can be reproduced by computing $f(x)\oplus f(x^{(i)})$ for each binary input $x$ and every coordinate $i$, and taking the relevant maxima.  The finite-state assertions for the consecutive quartic case are explicitly encoded in Appendix~A.  The source code used for the historical checks is not included with this manuscript; the descriptions here record the test domains and comparison procedures and do not claim an independently archived software package.

\section{Further questions}
The necklace multigraph and its flip multiplicities are known~\cite{GregorEtAl}, and the distance-two rotational structure used above overlaps the recent classification of Makhynko~\cite{Makhynko}.  Chakraborty's work~\cite{Chakraborty} addresses asymptotic sensitivity constructions and lower bounds for cyclically invariant functions.  Chaubal and G\'al~\cite{ChaubalGal} studied the cyclic consecutive-ones construction and its asymptotic sensitivity; our run-threshold result above gives an exact formula for arbitrary parameters rather than a new construction.  To the best of our knowledge, the stabilizer/near-period multiplicity identity
\[
m([x],[x^{(i)}])=\frac{h_x+q_i(x)}{h_{x^{(i)}}}
\]
and the resulting complete odd-prime local sensitivity formulas have not previously been stated in this form.

The formulas suggest several further directions.  In particular, the general quartic MRS problem remains open: determine the sensitivity of functions generated by
\[
x_0x_ax_bx_c
\]
for arbitrary $0<a<b<c<n$, including possible dependence on the arithmetic structure of the difference set of $\{0,a,b,c\}$ and on short rotational orbits.  Our quartic theorem solves only the consecutive support $\{0,1,2,3\}$.

Other directions include analogous closed forms for composite $n$, structural characterizations of low-sensitivity rotation symmetric functions from weighted necklace boundaries, and comparisons with known asymptotic results for transitive and cyclically invariant Boolean functions.

\appendix
\section{Finite-state certificate for the consecutive quartic case}
\label{app:quartic-certificate}

This appendix supplies the finite certificate used in the proof of the consecutive quartic theorem.  Consider the $64$ six-bit states and the two outgoing transitions from each state defined by (Q2).  Retain the six recurrent weight-one states in (Q4) and assign them phases $0,\ldots,5$ in cyclic order.

For an excursion $P$ from the six-cycle back to the six-cycle, with no intermediate visit to the cycle, let $\ell=|P|$, let $z(P)$ be the number of weight-zero edges, and let
\[
\delta(P)=\phi(\operatorname{end}P)-\phi(\operatorname{start}P)-\ell\pmod6.
\]
Contracting maximal weight-one portions reduces the verification to a finite shortest-path calculation on the explicit state graph.  More explicitly, form the augmented directed graph whose vertices are pairs
\[
(u,\rho),\qquad u\in\{0,1\}^6,\quad \rho\in\mathbb Z_6,
\]
where $\rho$ records path length modulo $6$.  A transition $u\to v$ of weight $w(u,v)$ induces
\[
(u,\rho)\longrightarrow(v,\rho+1)
\]
with cost $1-w(u,v)$.  Fix a starting state on the six-cycle, forbid intermediate visits to any of the six cycle states, and allow a cycle state only as the terminal state.  For each length residue and each initial/final phase difference, the corresponding entry below is the minimum path cost in this augmented graph.  Thus every entry is determined directly and reproducibly from the transition rule (Q2), without any additional mathematical assumption.  The minimum possible number of defect edges for each pair $(\ell\bmod6,\delta)$ is given by
\[
\begin{array}{c|rrrrrr}
\ell\bmod6\backslash\delta&0&1&2&3&4&5\\ \hline
0&4&5&4&3&2&6\\
1&0&5&4&3&6&3\\
2&6&5&4&4&2&3\\
3&6&3&4&5&4&3\\
4&6&5&4&5&2&6\\
5&5&1&4&3&6&1
\end{array}
\tag{A1}
\]
Every entry in column $\delta$ is at least
\[
d_6(\delta,0)=0,1,2,3,2,1,
\]
respectively.  Hence every such excursion satisfies
\[
z(P)\ge d_6(\delta(P),0),
\]
which is the inequality used in (Q5).  The table is a finite certificate: its entries are obtained directly from the transition rule (Q2) by minimizing the number of weight-zero edges among paths with the indicated length residue and phase defect, with intermediate visits to the six-cycle forbidden.

\subsection*{Avoidance certificate}

It remains to control closed walks that never visit the six recurrent states in (Q4).  On the remaining $58$ states define an integer potential $H$.  All states not listed below have potential $0$:
\[
\begin{array}{c|l}
H&\text{states}\\ \hline
-1&011111\\
-2&\begin{gathered}000111,\ 010110,\ 110000,\ 110001,\ 110100,\\
110101,\ 110111,\ 111101\end{gathered}\\
-3&101100,\ 101101,\ 111111\\
-4&101111,\ 111010,\ 111011\\
-5&111110\\
-6&110110.
\end{array}
\tag{A2}
\]
In particular, all eight states in the $-2$ row, including $110100$, have potential $H=-2$; the line break in that row does not change their common value.
For every transition $u\to v$ that remains inside these $58$ states, direct substitution in (Q2) verifies
\[
5w(u,v)-3\le H(u)-H(v).
\tag{A3}
\]
Thus, for a closed walk
\[
u_0\to u_1\to\cdots\to u_n=u_0
\]
avoiding the six-cycle,
\[
5\sum_{i=0}^{n-1}w(u_i,u_{i+1})-3n
\le
\sum_{i=0}^{n-1}\bigl(H(u_i)-H(u_{i+1})\bigr)=0.
\]
Consequently every such closed walk has total weight at most
\[
\frac{3n}{5}.
\tag{A4}
\]
This proves the avoidance estimate used in (Q7).

\section*{Declaration of Generative AI Use}
During the preparation of this manuscript, the authors used ChatGPT (OpenAI) to assist with language editing, literature exploration, computational verification, and the organization and presentation of mathematical arguments.  The authors independently checked the mathematical statements, proofs, computational claims, references, and conclusions and take full responsibility for the content of the manuscript.

\end{document}